\newtheorem{thm}{Theorem}
\newtheorem{lem}{Lemma}
\newtheorem{cor}{Corollary}
\newtheorem{defn}{Problem}
\definecolor{darkgreen}{rgb}{0,0.5,0}
\newtheorem*{remark}{Remark}
\renewcommand\nomgroup[1]{%
  \item[\bfseries
  \ifstrequal{#1}{A}{Parameters of Problem (12)}{%
  \ifstrequal{#1}{B}{Variables of Problem (12)}{%
  \ifstrequal{#1}{C}{Parameters of Algorithm 1}{}}}%
]}
\begin{document}
\title{Data-Driven Distributionally Robust Electric Vehicle Balancing for Autonomous Mobility-on-Demand Systems under Demand and Supply Uncertainties}
\author{
{Sihong He,} \and {Zhili Zhang,} \and{Shuo Han,} \and {Lynn Pepin,} \and {Guang Wang,} \and {Desheng Zhang,} \and {John Stankovic,} \and {Fei Miao}
\thanks{This work is partially supported by NSF S\&AS-1849246, NSF CPS-1932250 and NSF S\&CC-1952096. Sihong~He, Zhili~Zhang, Lynn~Pepin, and Fei~Miao are with the Department of Computer Science and Engineering, University of Connecticut, Storrs Mansfield, CT, USA 06268.  Email: \{sihong.he, zhili.zhang, lynn.pepin, fei.miao\}@uconn.edu. This work is also partially supported by NSF S\&AS-1849238 and CPS-1932223. Shuo Han is with the Department of Electrical and Computer Engineering, University of Illinois, Chicago, IL, USA 60607. Email:hanshuo@uic.edu. Guang Wang is with the Department of Computer Science, Florida State University, Tallahassee, FL 32304. Email: guang@cs.fsu.edu. Desheng Zhang is with the Department of Computer Science, Rutgers University, Piscataway, NJ, USA 08901. Email: desheng.zhang@cs.rutgers.edu. John A. Stankovic is with the University of Virginia, Charlottesville, VA, USA 22904. Email: stankovic@cs.virginia.edu. 
}
}

\maketitle

\begin{abstract}
Electric vehicles (EVs) are being rapidly adopted due to their economic and societal benefits. Autonomous mobility-on-demand (AMoD) systems also embrace this trend. However, the long charging time and high recharging frequency of EVs pose challenges to efficiently managing EV AMoD systems. The complicated dynamic charging and mobility process of EV AMoD systems makes the demand and supply uncertainties significant when designing vehicle balancing algorithms. In this work, we design a data-driven distributionally robust optimization (DRO) approach to balance EVs for both the mobility service and the charging process. The optimization goal is to minimize the worst-case expected cost under both passenger mobility demand uncertainties and EV supply uncertainties. We then propose a novel distributional uncertainty sets construction algorithm that guarantees the produced parameters are contained in desired confidence regions with a given probability. To solve the proposed DRO AMoD EV balancing problem, we derive an equivalent computationally tractable convex optimization problem. Based on real-world EV data of a taxi system, we show that with our solution the average total balancing cost is reduced by 14.49\%, and the average mobility fairness and charging fairness are improved by 15.78\% and 34.51\%, respectively, compared to solutions that do not consider uncertainties.
\end{abstract}

\begin{IEEEkeywords}
Data Driven, Electric Vehicle, Mobility-on-Demand Systems, Fairness, Distributionally Robust Optimization
\end{IEEEkeywords}

    \nomenclature[A, 01]{\(K\)}{ number of time intervals in a day}
    \nomenclature[A, 02]{\(N\)}{ number of regions}
    \nomenclature[A, 03]{\(\tau\)}{model predicting time horizon}
    \nomenclature[A, 04]{$r \in \mathbb{R}^{N \tau}$}{ concatenated demand vector with unknown distribution function $F^*_r$}
    \nomenclature[A, 05]{$c\in \mathbb{R}^{N\tau}$}{ concatenated newly supply vector with unknown distribution function $F^*_c$}
    \nomenclature[A, 06]{$W \in \mathbb{R}^{N \times N}$}{cost matrix for vacant EVs, $w_{ij}$ is the moving cost from region $i$ to $j$ for one vacant EV }
    \nomenclature[A, 07]{$W^* \in \mathbb{R}^{N \times N}$}{cost matrix for low-battery EVs, $w^*_{ij}$ is the moving cost from region $i$ to $j$ for one low-battery EV }
    \nomenclature[A, 08]{$P^k_v, P_o^k,P_l^k, Q^k_v, Q^k_o$}{region transition matrices from time $k$ to $(k+1)$}
    \nomenclature[A, 09]{$V^1\in \mathbb{N}^{N}$}{the initial number of vacant EVs at each region provided by streaming data}
    \nomenclature[A, 10]{$O^1 \in \mathbb{N}^{N}$}{the initial number of occupied EVs at each region provided by streaming data}
    \nomenclature[A, 11]{$m_1,m_2 \in \mathbb{R}_+$}{ upper bound of distance each EV can drive idly}
    \nomenclature[A, 12]{$a \in \mathbb{R}_+$}{ power on the denominator of the objective}
    \nomenclature[A, 13]{$\theta,\beta \in \mathbb{R}_+$}{ weight factors of the objective function}
    \nomenclature[A, 14]{$l,h \in \mathbb{R}^{\tau}$}{lower and upper bounds of mobility supply-demand ratio}
    
    \nomenclature[B, 01]{$X^k \in \mathbb{R}^{N \times N}_+$}{$x_{ij}^k$ is the number of vacant EVs dispatched from region $i$ to region $j$ at time $k$}
    \nomenclature[B, 02]{$Y^k \in \mathbb{R}^{N \times N}_+$}{$y_{ij}^k$ is the number of low-battery EVs dispatched from region $i$ to region $j$ at time $k$}   
	\nomenclature[B, 03]{$V^k \in \mathbb{R}^{N}_+$}{ number of vacant EVs at each region before dispatching at the beginning of time $k$}
	\nomenclature[B, 04]{$O^k \in \mathbb{R}^{N}_+$}{ number of occupied EVs at each region before dispatching at the beginning of time $k$}
	\nomenclature[B, 05]{$L^k \in \mathbb{R}^{N}_+$}{ number of low-battery EVs at each region before dispatching at the beginning of time $k$}
	\nomenclature[B, 06]{$S^k \in \mathbb{R}^{N}_+$}{ number of vacant (available) EVs at each region after dispatching at time $k$}
	\nomenclature[B, 07]{$D,U \in \mathbb{R}^{N\tau}_+$}{ slack variables in constraint (8)}

	\printnomenclature[0.8in] \label{NOMENCLATURE}
	
\section{Introduction}
\label{intro}
In Autonomous Mobility-on-Demand (AMoD) systems, self-driving vehicles provide personal on-demand transportation service for customers and rebalance themselves to maintain acceptable quality of service throughout the system~\cite{zhang2016model,zhang2016control}. \textcolor{black}{AMoD systems have been advocated as one of the most promising energy-efficient transportation solution. } Electric Vehicles (EVs) have tremendous potential in AMoD systems for being economical and environmentally friendly~\cite{mitchell2010reinventing}. \textcolor{black}{For instance, EV AMoD systems directly address the problems of oil dependency and air pollution.} However, EVs have quite different energy-refilling patterns compared with traditional gas-powered vehicles. They have long charging times, high charging frequency, uncertain sporadic demands, and dispersed mobility patterns~\cite{wang2019experience, ammous2018optimal}. 

\textcolor{black}{There are emerging problems when commercial EVs are gradually introduced into AMoD systems, considering current charging technologies and limited charging infrastructures.} EVs' frequent intermittent charging requirements may reduce the quality of mobility service. Unbalanced EV distribution may also cause long waiting times and low charging service quality in some charging stations. To address these challenges, researchers proposed a plenty of vehicle allocation methods, charging scheduling approaches and joint charging-relocation recommendation schemes. \textcolor{black}{Vehicle allocation methods rebalance the vehicle distributions over time in responding to known or predicted demand and supply~\cite{hao2020robust, chavhan2020novel, zhang2016model}. We further discuss these methods in the Related Work section}. 

However, most existing vehicle allocation methods and charging scheduling approaches do not consider uncertainties by assuming the measurement and prediction models are perfect~\cite{iglesias2019bcmp, zhang2016model}, while model uncertainty affects the performance of decisions~\cite{zhen2021mathematical}. And it is difficult to accurately predict passengers' demand and EVs' charging patterns. As the promotion of EVs continues, we cannot ignore the uncertainties caused by EVs' charging behaviors. For example, uncertain charging time, queuing time, charging frequency all contribute to the supply uncertainty, due to limited knowledge we have about charging patterns~\cite{wang2019experience}. \textcolor{black}{Then difficulties appear in many aspects, such as introducing multiple uncertainties into AMoD systems, modeling the uncertain parameters and analyzing the mutual dependency between supply and demand.} Therefore, making real-time decisions under both supply and demand uncertainties are still charging and unsolved research problems. 

Considering both passenger demand and EV supply uncertainties, we propose a distributionally robust optimization (DRO) approach to make robust vehicle balancing decisions for both mobility service and charging scheduling. \textcolor{black}{DRO method considers the uncertain parameters' probability distributions are contained in some pre-specified distributional uncertainty sets~\cite{zhen2021mathematical}. More DRO literature are discussed in the Related Work section.} \textcolor{black}{In our proposed method, we assume the true probability distribution of passenger demand and EV supply lies in a set of probability distributions, i.e., a distributional uncertainty set.} We define the vehicle balancing cost and system-level charging service fairness requirement in our objective function, which is convex over the decision variables and concave over the uncertain parameters. The vehicle balancing cost includes the balancing cost to send vacant EVs towards predicted mobility demand and low battery EVs to charging stations. We put the mobility service fairness requirement in convex constraints. This objective and constraint design decouples the mutual dependencies between EV supply and passenger demand. \textcolor{black}{Thus, we calculate balancing decisions by solving the DRO problem, i.e. minimizing the worst-case objective function over distribuional uncertainty sets and convex constraints.} We further derive an equivalent convex optimization problem form for the DRO problem to provide solutions in a computationally tractable way. We also propose efficient distributional uncertainty set construction algorithms to construct stable uncertainty set. \textcolor{black}{We briefly summarize the proposed framework structure of this paper in Fig.~\ref{fig_summary}.}

The \textit{key contributions} of our work are as follows:

\begin{itemize}
 	\item To the best of our knowledge, our proposed mathematical system-level vehicle balancing framework is the first to consider both future mobility demand uncertainties and EV supply uncertainties for EV AMoD systems. While model predictive control algorithms~\cite{zhang2016model, iglesias2018data, miao2017data_uncertainty} have been designed considering AMoD system demand uncertainties in the literature, the supply side uncertainties for EV AMoD are not well studied yet.
 	\item We design a distributionally robust optimization approach to balance EVs across a city to provide fair passenger mobility and EV charging service while reducing the total balancing cost. We consider probabilistic distribution uncertainties of both the passenger mobility demand and the EV supply caused by the challenge of charging process prediction~\cite{ammous2018optimal,tian2016real}. The proposed problem formulation decouples the mutual dependencies between EV supply and passenger demand. \textcolor{black}{We further design an efficient algorithm to construct distributional uncertainty sets, and prove that the produced uncertainty set parameters are guaranteed being contained in desired  confidence regions with a given probability.}
 	\item We derive an equivalent form of a convex optimization problem for the proposed distributionally robust optimization problem. Hence, we provide a system-level performance guarantee in a computationally tractable way under supply and demand uncertainties. Based on EV taxi fleet of Shenzhen city, which is a real-world EV AMoD system dataset, we show that our method reduces the average total  balancing cost by 14.49\%, the average mobility unfairness and charging unfairness by  15.78\% and 34.51\%, respectively, compared to non-robust solutions.  
 \end{itemize}
 
The rest of the paper is organized as follows. The related work and distributionally robust EV balancing problem formulation are presented in Section~\ref{sec:realted_work} and~\ref{sec:problem_formulation}, respectively. The formal distribuitonally uncertainty set form and novel construction algorithms are in Section~\ref{sec:algorithm}. An equivalent computationally tractable form is derived in Section~\ref{sec:theory}. Experiments are in Section~\ref{sec:experiment}. We conclude in Section~\ref{sec:conclusion}.
\vspace*{-5pt}

\begin{figure}[h]
	\centering
	\includegraphics [width=0.4\textwidth]{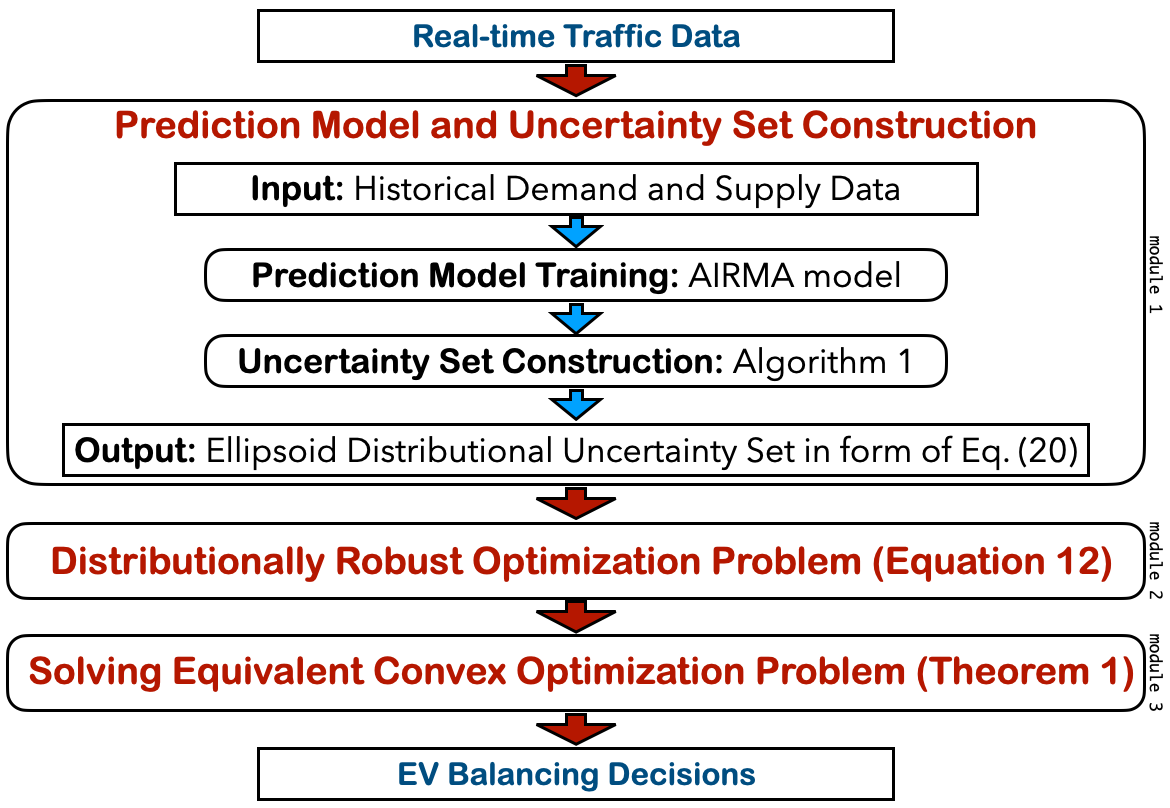}
	\vspace{-10pt}
	 
	\caption{In module 1, we use historical data to train a prediction model for supply and demand then use the well-trained prediction models as the input of our Algorithm 1 to finally get the distributional uncertainty sets for demand and supply in form of \eqref{set:uncertainty set}. In module 2, we get the DRO EV balancing problem (the distributionally robust optimization problem \eqref{opt_final}) using the real-time sensing data and distributional uncertainty sets. In module 3, we apply the conclusions in Theorem \ref{thm:theory1} to obtain the EV balancing decisions by solving the equivalent convex optimization problem \eqref{obj_theory}.}
	\color{black}
	\label{fig_summary}
	\vspace{-15pt}
\end{figure}
\section{Related Work}
\label{sec:realted_work}

\textbf{A. Vehicle Allocation: }To improve the performance of AMoD systems, multiple vehicle allocation and balancing approaches have been proposed. \textcolor{black}{For instance, queuing network model~\cite{iglesias2019bcmp}, flow framework~\cite{wollenstein2021routing},  model predictive control~\cite{zhang2016model, iglesias2018data}, receding horizon control~\cite{miao2016taxi}, and reinforcement learning method~\cite{liu2020context, he2022robust} have been designed}. \textcolor{black}{However, most of them do not consider EV charging patterns nor uncertainties caused by EV charging behaviours. Making real-time decisions under supply and demand uncertainties is still a challenging and unsolved problem. Our work jointly considers the EV allocation and charging problem in a distributionally robust optimization problem while considering two-side uncertainties. }

\textbf{B. Charging Scheduling: } \textcolor{black}{To improve EV charging process efficiency, MDP based~\cite{lin2021toward} and queuing model based~\cite{ammous2018optimal} charging scheduling, charging station deployment~\cite{faridimehr2018stochastic}, online charging recommendations~\cite{tian2016real} have been proposed.} Future charging supply or demand are usually considered in charging recommendation~\cite{yuan2021sac,abdullah2021reinforcement}. But these methods haven't provided integrated passenger picking-up and EV charging scheduling solutions for EV AMoD systems. \textcolor{black}{Though an E-taxi charging framework under dynamics of renewable energy and passenger mobility is proposed~\cite{yuan2021sac, estandia2021interaction}, its performance may be undermined by system uncertainties that are not considered. Other existing work either only focus on robust charging scheduling with uncertainties from charging behavior~\cite{zhao2017robust, yang2015noncooperative} or only study joint EV balancing and charging strategies without considering uncertainties~\cite{yuan2021sac}.} It is still challenging to simultaneously deal with passenger mobility demand and EV supply uncertainties when making EV balancing decisions for both passengers picking-up and EV charging. \textcolor{black}{Our work fills this gap by proposing a novel distributionally robust optimization EV balancing framework to provide fair passenger mobility and EV charging service while reducing the total balancing cost.}

\textbf{C. Robust and Distributionally Robust Optimization: }Robust optimization (RO) assumes that uncertain parameters can be any value in an uncertainty set, whereas distributionally robust optimization (DRO) models the uncertain parameters as random variables whose underlying probability distribution is contained in a distributional uncertainty set~\cite{zhen2021mathematical}. In both case, the goal is to find the best decision in view of the worst-case realization of uncertainty. However, RO may propose overly conservative decisions than DRO since they do not exploit distributional information~\cite{wiesemann2014distributionally}. Both RO and DRO have wide application in many disciplines, such as energy, healthcare, transportation, logistics and inventory, etc \cite{delage2010distributionally,wiesemann2014distributionally,gabrel2014recent,lin2022distributionally}. Set-membership methods use a deterministic unknown-but-bounded description of noise and parametric uncertainties \cite{wang2018set, pourasghar2016comparison}. Contrasted to such deterministic approaches, DRO is a stochastic approach that uncertain parameters are assumed to be follow some statistical distributions. \textcolor{black}{For AMoD system balancing, Hao et al. consider the idle vehicle pre-allocation problem with uncertain demands and covariate information using DRO~\cite{hao2020robust}; Miao et al. develop a data-driven DRO vehicle balancing method to accommodate uncertainties in the predicted demand distribution~\cite{miao2021data}. These methods only consider the demand uncertainty and cannot be directly applied to solve the challenge of integrally considering the demand and EV supply uncertainties.} To the best of our knowledge, we are the first to consider both future mobility demand distribution uncertainties and EV supply distribution uncertainties.

\vspace*{-12pt}

\section{Problem Formulation}
\label{sec:problem_formulation}
Mobility demand uncertainty has been considered in AMoD vehicle allocation or balancing~\cite{miao2021data, zhang2016model} and EV supply uncertainty has also been addressed in EV charging scheduling~\cite{zhao2017robust, yang2015noncooperative}. However, it is still challenging and has not been studied to consider these two-sided uncertainties simultaneously for AMoD systems using EV, where the EV balancing for mobility and charging process is tightly integrated and should be optimized jointly. 

In this section, we formulate the EV balancing problem as a distributionally robust optimization (DRO) problem considering both predicted passenger mobility demand and EV supply uncertainties. The DRO decision minimizes the worst-case expected cost over a set of uncertain supply-and-demand's probability distributions, and provides performance guarantee of the decisions under model uncertainties. We consider both passenger supply-demand ratio fairness and EV charging supply-demand ratio fairness. While previous work considers only passenger mobility supply-demand ratio fairness in the objective function~\cite{miao2021data}, we put it in the constraints and put EV charging supply-demand ratio fairness in the objective function (see Eq.~\eqref{opt_final} for detail). Such a problem formulation makes it possible to consider both charging and mobility fairness under two-sided uncertainties, and decouples the mutual dependencies between EV supply and passenger demand under complex dynamics between EV charging and mobility patterns.

The balancing decisions are updated in a receding horizon control process~\cite{chen2020optimal, miao2016taxi}. At each time step, the dispatching center first updates vehicle status and passenger demand information, then calculates the EV balancing decisions, and finally sends the decisions to the EVs to execute. Vacant EVs are allocated among different regions to pick up current and predicted future passengers. Low-battery EVs are dispatched to regions with charging stations to charge. The dispatching center focuses on the global-level balancing among regions. A local controller finishes one-to-one or one-to-group (carpool) EV-passenger or EV-charging station matching and detailed routing. We focus on the system-level robust EV balancing method design within a city. Local-level trip assignment and routing algorithms are out of the scope of this work and are investigated in the literature~\cite{mourad2019survey, alonso2017demand, chen2017hierarchical}. Our system-level EV balancing method can be applied in conjunction with these local-level one-to-one or one-to-group matching algorithms.

\begin{figure}[ht] 
\vspace*{-8pt}
    \centering 
    \begin{tikzpicture}[shorten >=1pt,node distance=3cm,on grid,auto]
  \tikzstyle{every state}=[fill={rgb:black,1;white,10}]

    \node[state]   (q_1)                    {L};
    \node[state] (q_2)  [right of=q_1]    {V};
    \node[state]           (q_3)  [right of=q_2]    {O};

    \path[->]
    (q_1) edge [loop above] node {}    (   )
          edge [bend left]  node {finish charging}    (q_2)
    (q_2) edge [bend left]  node {start order}    (q_3)
          edge [bend left]  node {lack of energy}    (q_1)
          edge [loop above] node {}    (   )
    (q_3) edge [bend left]  node {finish order}  (q_2)
          edge [loop above] node {}    (   );
\end{tikzpicture}
\vspace*{-10pt}
\caption{Status transition process: the status "vacant" is a bridge between status "low-battery" and "occupied".}
\vspace*{-5pt}
\label{fig_transit}
\end{figure}
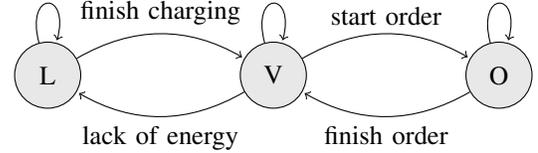

\vspace*{-15pt}
\subsection{EV Status and Corresponding Actions}
We define three status for one EV according to its battery level and working state: vacant, occupied and low-battery. A vacant EV: is one that is working and has a battery level is higher than a threshold $\textbf{e}$, but has no passengers. An occupied EV: is one that is working and has a battery higher than a threshold $\textbf{e}$ and has passengers. A low-battery EV: is one that is working and has no passengers in it and has a battery level lower than the threshold $\textbf{e}$. The controller dispatches vacant EVs according to current and predicted future passenger demands, and assigns low-battery EVs to regions where located charging stations. It has no actions for occupied EVs since these EVs are busy in serving passengers. An EV's status can transit among these three categories. Fig. \ref{fig_transit} shows the status transition process of the EVs according to the EV AMoD system dynamics \cite{miao2017data_uncertainty, miao2017data,chen2017hierarchical, zardini2022analysis, xu2018taxi}. A low-battery/occupied EV can only transfer to a vacant EV or stay in the current status. The status "vacant" is a bridge between the status "low-battery" and "occupied".
\vspace*{-10pt}
\subsection{DRO EV Balancing Problem for Mobility and Charging}
Our goal is to find robust EV balancing decisions when considering the randomness and prediction errors of both EV supply and passenger mobility demand. Hence, we formulate a distributionally robust optimization (DRO) problem to minimize the expected EV balancing cost and provide fair charging and mobility service over an uncertainty set of demand and supply probability distributions. Nomenclature subsection provides an overview for parameters and variables in the problem formulation and algorithm. 

We divide one day into $K$ time intervals and denote $k \in \{1,2,...,K\}$ as the temporal index. We separate a city into $N$ regions and denote $i \in \{1,2,...,N \}$ as the spatial index. We denote $r_i^k$ as the predicted total number of passengers demand, and $c_i^k$ as the predicted total number of vacant EVs that finish charging and turn to supply in region $i$ at time interval $k$. We then define $r^k = [r_1^k, r_2^k,...,r_N^k]^{\top} \in \mathbb{R}^N$ and $c^k =[c_1^k, c_2^k,...,c_N^k]^{\top} \in \mathbb{R}^N$ as vectors containing the demand and supply of each region in time interval $k$, $r=[r^1, r^2, ... , r^{\tau}] \in \mathbb{R}^{N\tau}$ and $c= [c^1, c^2, ... , c^{\tau}] \in \mathbb{R}^{N\tau}$ as the concatenation of demand and supply respectively. To consider model uncertainties, we assume they are random vectors instead of deterministic vectors. We use $F^*_r$ and $F^*_c$ to denote the unknown true probability distributions of $r,c \in \mathbb{R}^{N\tau}$ respectively, i.e., $r \sim F^*_r$ and $c \sim F^*_c$. We use non-negative matrices $X^k, Y^k \in \mathbb{R}^{N\times N}$ as the decision matrices, where $x_{ij}^k, y_{ij}^k$ is the total number of vacant, low-battery EVs that will be dispatched from region $i$ to region $j$ at the beginning of time interval $k$, respectively. Minimizing the expected vacant and low-battery EVs balancing cost under random demand vector $r$ and supply vector $c$ is described as the following stochastic programming (SP) problem:
\begin{align}
	\begin{split}
		\underset{X^{1:\tau}, Y^{1:\tau}}{\text{min.}}\ &\mathbb{E}_{r\sim F^*_r,c\sim F^*_c}\left[J(X^{1:\tau}, Y^{1:\tau},r,c) \right] \\
		\text{s.t.}\quad  & X^{1:\tau}, Y^{1:\tau}\in \mathcal{D},
	\end{split}
	\label{opt_stochastic}
\end{align}
where $J(X^{1:\tau}, Y^{1:\tau},r,c)$ is a cost function of allocating EVs given balancing decisions $X^{1:\tau} = \{X^1, X^2,\dots,X^{\tau}\}$ and $Y^{1:\tau} = \{Y^1, Y^2,\dots,Y^{\tau}\}$. $\mathcal{D}$ defines the convex constraints domain of the decision variables.

However, in the real world, we usually have limited knowledge about the true probability distributions $F^*_r$ and $F^*_c$. With historical or streaming data, we can estimate sets of probability distributions $\mathcal{F}_r$ and $\mathcal{F}_c$ such that $F^*_r \in \mathcal{F}_r$, $F^*_c \in \mathcal{F}_c$~\cite{tian2016real, wang2019sharedcharging, rossi2019interaction}, instead of the exact $F^*_r$ and $F^*_c$. Meanwhile, problem \eqref{opt_stochastic} is computationally expensive to solve in real-time~\cite{delage2010distributionally}. Hence, we consider minimizing the worst-case expected cost, i.e., a minimax form of problem \eqref{opt_stochastic}: 
\begin{align}
	\begin{split}
		\underset{X^{1:\tau},Y^{1:\tau}}{\text{min.}}\ \underset{F_r\in \mathcal{F}_r,F_c\in \mathcal{F}_c}{\text{max.}}\quad &\mathbb{E} \left[J(X^{1:\tau}, Y^{1:\tau},r,c) \right]\\
\text{s.t.}\quad  &X^{1:\tau}, Y^{1:\tau}\in \mathcal{D}.
	\end{split}
	\label{opt_minmax}
\end{align}
Problem \eqref{opt_minmax} is a distributionally robust optimization (DRO) problem~\cite{delage2010distributionally, zhen2021mathematical} that assumes $F^*_r \in \mathcal{F}_r$, $F^*_c \in \mathcal{F}_c$. In the following context, we define the complete forms of objective function and constraints. The formal definitions and construction algorithms of the distributional uncertainty sets $\mathcal{F}_r$ and $\mathcal{F}_c$ are introduced in Section~\ref{sec:algorithm}. 
\vspace*{-3pt}

\begin{remark}[Difficulties of considering multiple mutual effected parameter uncertainties]
DRO methods have been designed in the literature~\cite{miao2021data} for vehicle balancing problems for gasoline vehicle MoD system,  where only the passenger mobility demand uncertainty has been considered. In contrast, our work considers the uncertainties of multiple parameters, i.e. demand and supply uncertainties. It is not straightforward to apply DRO methods that only consider a single parameter uncertainty to solve problems with multiple uncertain parameters, especially in a complicated and dynamic transportation system. The uncertain parameters of the system model are usually dynamically coupled  \cite{noyan2018distributionally, luo2020distributionally}. This mutual-dependent property results in two-fold difficulties. One is that formulating a computationally tractable DRO problem for vehicle balancing of a EV MoD system considering both the charging and passenger service processes gets more challenging. Considering the intrinsic connections of uncertain parameters, the DRO problem formulation should be carefully designed to satisfy necessary conditions of computational tractability, which is even challenging with a single uncertain parameter as shown in the literature \cite{delage2010distributionally}. To keep the problem formulation convex over decision variables and concave over uncertain parameters, we define the system performance requirements such as charging fairness in the objective function as a fractional function, and the mobility service fairness in the constraint functions as linear inequalities. We define the mobility dynamic process and status transition process of EVs as linear constraints. We explain all the detail in the problem formulation section. Another challenge is that numerically solving a DRO problem like \eqref{opt_final} can be challenging given the definitions of the parameter uncertainty sets \cite{delage2010distributionally,noyan2018distributionally, luo2020distributionally}. In this work, we decouple the dependencies of uncertain parameters in the problem formulation \eqref{opt_final}, and then derive a computationally tractable and equivalent convex optimization problem for \eqref{opt_final} in Section \ref{sec:theory}.
\end{remark}
\color{black}

\vspace*{-15pt}

\subsection{Balancing Cost}

One optimization goal is to minimize the worst-case EV balancing cost. We define $W \in \mathbb{R}^{N \times N}$ as the cost matrix where $w_{ij}$ is the cost of sending a vacant EV from region $i$ to region $j$. The cost can be the approximated distance given a specific region partition method, approximated routing distance or travel time between two regions, etc. We select the approximated routing distance as the cost definition in our data-driven experiments. We also define $W^* \in \mathbb{R}^{N \times N}, w^*_{ij}$ as the cost matrix of sending a low-battery EV. We set $w^*_{ij} = \infty$ if there are no charging stations in region $j$, since low-battery EV should not go to regions without charging stations. 

Then the total balancing cost function $J_D$ for $\tau$ intervals is defined as~\eqref{obj_dist}, where $\beta$ is a positive weight coefficient. 
\begin{align}
	J_D (X^{1:\tau}, Y^{1:\tau}) := \sum\limits_{k=1}^{\tau} \sum\limits_{i=1}^{N} \sum\limits_{j=1}^{N} (x^k_{ij} w_{ij} + \beta y^k_{ij} w^*_{ij}).
	\label{obj_dist}
\end{align}
\vspace*{-20pt}
\subsection{Constraints Definition}

\subsubsection{Mobility Dynamics Constraints}

We define the mobility dynamics constraints in~\eqref{def_cons_mobility} which describes the EV status transition. Let $V_i^k, O_i^k \in \mathbb{R}$ be the number of vacant and occupied EVs in region $i$ at the beginning of time $k$ before balancing, respectively. We define $S_i^k$ as the total number of vacant EVs that are available to serve in region $i$ after executing balancing decisions at time $k$. Then the following Equations~\eqref{def_cons_mobility} of $V_i^k, O_i^k, S_i^k$ describe the EV AMoD dynamics \cite{miao2017data_uncertainty, miao2017data,chen2017hierarchical, zardini2022analysis, xu2018taxi} for $k \in \{1,\dots,\tau-1\}$: 
\vspace*{-5pt}

\begin{align}
	\begin{split}
		 S^k_i &=\sum\limits_{j=1}^{N} x^k_{ji}-\sum\limits_{j=1}^{N} x^k_{ij} + V^k_i, k \in \{1,\dots,\tau\}; \\
		V^{k+1}_i&= \sum\limits_{j=1}^{N} P^k_{vji}S^k_j+ \sum\limits_{j=1}^{N} Q^k_{vji}O^k_j + c_i^k,\\ 
		 O^{k+1}_i&=\sum\limits_{j=1}^{N}P^k_{oji}S^k_j+ \sum\limits_{j=1}^{N} Q^k_{oji}O^k_j, k \in \{1,\dots,\tau-1\};
	\end{split}
	\label{def_cons_mobility}
\end{align}
where $P^k_v, P^k_o, Q^k_v, Q^k_o \in \mathbb{R}^{N \times N}$ are region transition matrices: $P^k_{vji}(P^k_{oji})$ is the probability that a vacant EV moves from region $j$ at the beginning of time $k$ will transverse to region $i$ and being vacant (occupied) at the beginning of time $k+1$. Similarly, $Q^k_{vji}(Q^k_{oji})$ is the probability that an occupied EV moves from region $j$ at time $k$ will go to region $i$ and being vacant (occupied) at the beginning of time $k+1$. The mobility constraints \eqref{def_cons_mobility} show that (i) the number of vacant EVs after balancing is related to the the number of vacant EVs before balancing and the net change number of vacant EVs according to the balancing decisions; (ii) there are two sources of occupied EVs: one is former occupied EVs that are still occupied, and the other is former vacant EVs that change into occupied; (iii) similarly, there are two sources of low-battery EVs: former low-battery EVs that stay in low-battery status, and former vacant EVs that change into low-battery status. The method of calculating region transition matrices is introduced in the literature~\cite{miao2016taxi}. When receding the time horizon, locations and status of all EVs are updated by real-time sensing data therefore $V^1, O^1$ are provided realtimely.

\subsubsection{Charging Dynamics Constraints}
We define $L_i^k$ as the total number of low-battery EVs in region $i$ before balancing at the beginning of time $k$, the charging dynamics constraints in Equation~\eqref{def_cons_charging} states the quantitative relationship between low-battery EVs and available vacant EVs. 
\vspace*{-5pt}
\begin{align}
		L^{k+1}_i &=\sum\limits_{j=1}^{N} y^k_{ji}-\sum\limits_{j=1}^{N} y^k_{ij} + \sum\limits_{j=1}^{N} P_{lji}^k S^k_{j},
	\label{def_cons_charging}
\end{align}
where $L^1$ is given by real-time data, $P^k_l \in \mathbb{R}^{N \times N}$ is the region transition matrix. Here $P^k_{lji}$ is the probability that a vacant EV moves from region $j$ at the beginning of time $k$ will go to region $i$ and being low-battery at the beginning of time $k+1$. The region transition matrices estimated from data satisfy that $\sum\limits_{j=1}^{N} P^k_{lij} + P^k_{vij} + P^k_{oij} = 1 \ \text{and} \; \sum\limits_{j=1}^{N} Q^k_{oij} + Q^k_{vij} = 1.$ The method of calculating region transition matrices is introduced in the literature~\cite{miao2016taxi}.

\subsubsection{Moving Constraints}
We also have moving constraints for decision variables $X^k$ and $Y^k$ defined in~\eqref{def_cons_moving}. When the balancing cost $w_{ij}$ stands for distance, the idle driving distance EVs can move during a given time interval is limited, either due to speed limit or insufficient battery. Hence, 
\begin{align}
	\begin{split}
		x_{ij}^k \geq 0 \text{ and } x_{ij}^k = 0 \text{ when } w_{ij} \geq m_1;\\
        y_{ij}^k \geq 0 \text{ and } y_{ij}^k = 0 \text{ when } w^*_{ij} \geq m_2,
	\end{split}
	\label{def_cons_moving}
\end{align}
where $m_1 > 0$ and $m_2 > 0$ is the upper bound moving distance for one vacant EV, and one low-battery EV, respectively. The values of $m_1$ and $m_2$ can be obtained by applying the comprehensive investigation results and methods in mobility and charging patterns of EVs from the literature~\cite{wang2019experience, wang2020faircharge}.
\vspace*{-10pt}
\subsection{Mobility Supply-Demand Ratio}
Mobility supply-demand ratio fairness is one service quality metric for AMoD systems~\cite{zhang2016model, miao2021data, liu2020context}. In the literature, this goal is usually defined as minimizing the total absolute difference between local and global mobility supply-demand ratio for $\tau$ time intervals, i.e., $J_M(X^{1:\tau})$ defined in \eqref{def_mobility_unfairness}.
\begin{align}
J_M(X^{1:\tau}) :=	\sum_{k=1}^{\tau}\sum_{i=1}^{N}\left| \frac{r^k_i}{S^k_i} - \frac{\sum_{j=1}^{N}r^k_j}{\sum_{j=1}^{N} S^k_j}\right|.
	\label{def_mobility_unfairness}
\end{align}
Note that the uncertain parameter $c^k_i$ is related to $S^k_i$ as defined in \eqref{def_cons_mobility}, the uncertain parameters $r^k_i$ and $c^k_i$ are included in the numerator and denominator of mobility demand-supply ratio, respectively. Hence, directly minimizing $J_M(X^{1:\tau})$ is computationally intractable. Instead of minimizing $J_M(X^{1:\tau})$ in~\eqref{def_mobility_unfairness}, we consider the following mobility fairness constraints to make sure the mobility supply-demand ratio of each region is within a same range to provide fair service: $l^k \leq {r_i^k}/{S_i^k} \leq h^k, \quad k \in \{1,\dots,\tau\}$,
where $l^k(h^k)$ is the lower (upper) bound of the mobility supply-demand ratio at time $k$. The value of $l^k$ and $h^k$ can be decided by historical data. We transfer those inequalities to the following Equations form with slack variables $D_i^k, U_i^k$:  
\begin{align}
	\begin{split}
		&r_i^k - l^k S_i^k - (D_i^k)^2 = 0,\ 
        r_i^k - h^k S_i^k + (U_i^k)^2 = 0,
	\end{split}
	\label{def_cons_mobility_unfairness}
\end{align}
\vspace*{-15pt}
\subsection{Charging Supply-Demand Ratio}
When a fixed number and locations of charging stations are given, to avoid long waiting time or queues at some charging stations, we send low-battery EVs to regions with charging stations according to the dynamic availability of charging spots at different regions.  Hence, we balance the charging supply-demand ratio for EV charging across the whole city~\cite{green2011queueing}. 

We denote $T_i^k$ as the net number of low-battery EVs in region $i$ after the low-battery EV balancing decision $Y^k$,
\begin{align}
 \label{def_net_low}
    T_i^k = \sum\limits_{j=1}^{N} y^k_{ji}-\sum\limits_{j=1}^{N} y^k_{ij}, \ \forall \ i \in \sigma.
\end{align}
where $\sigma$ is the set of regions with charging stations. And we set $T_i^k = 0, \ \forall \ i \not\in \sigma$, since no low-battery EVs should be dispatched to regions where located a charging station. The charging supply-demand ratio in region $i$ during time $k$ is approximated as the ratio of the new charging spots supply and the EV charging demand rate ${c_i^k}/{T_i^k}$, where $c^k_i$ is the number of EV that finished charging and become vacant vehicle supply in region $i$ during time $k$. When one EV finishes charging, there will be one newly available charging spot; hence, we use $c^k_i$ to approximate the amount of charging supply to serve low-battery EVs. \textcolor{black}{To balance the charging supply-demand ratio over the city, we minimize the total absolute difference between the local and global charging supply-demand ratio for $\tau$ time intervals, i.e. $J_1$ defined in \eqref{def_charging_sdr_unfairness}. Smaller $J_1(Y^{1:\tau})$ means the charging decisions $Y^{1:\tau}$ achieve higher charging fairness. Such fairness metrics have been widely used in literature \cite{miao2021data,miao2016taxi}.}
\begin{align}
J_1(Y^{1:\tau})=\sum_{k=1}^{\tau}\sum_{i \in \sigma}\left| \frac{c^k_i}{T^k_i} - \frac{\sum_{j \in \sigma}c^k_j}{\sum_{j \in \sigma} T^k_i}\right|.
	\label{def_charging_sdr_unfairness}
\end{align}
However, function~\eqref{def_charging_sdr_unfairness} is not concave over uncertainty parameter $c^k_i$. For computationally tractability, we define the objective of fair charging as to minimize the function $J_E$
\begin{align}
	\begin{split}
		J_E(Y^{1:\tau}) := \sum\limits_{k=1}^{\tau} \sum_{i \in \sigma} \frac{c_i^k}{(T_i^k)^a}.
	\end{split}
	\label{def_charging_unfairness}
\end{align}
\textcolor{black}{$J_E(Y^{1:\tau})$ has good properties that it is linear in $c^i_k$ and convex over $Y^{1:\tau}$ when the power parameter $a>0$. What's more, the function~\eqref{def_charging_unfairness} approximates function~\eqref{def_charging_sdr_unfairness} when $a$ is designed to be small enough, according to Lemma 1 in the literature~\cite{miao2021data}}.
\vspace*{-10pt}
\subsection{DRO EV Balancing Problem Formulation}
 With constraints \eqref{def_cons_mobility}, \eqref{def_cons_charging}, \eqref{def_cons_moving}, \eqref{def_cons_mobility_unfairness}, we finally define the distributionally robust EV balancing for mobility and charging process under uncertain probability distributions of the demand and supply in \eqref{opt_final}, where the objective function $J$ is a weighted sum of the balancing cost function $J_D$ defined in \eqref{obj_dist} and charging unfairness function $J_E$ defined in \eqref{def_charging_unfairness}.
\begin{align}
	\begin{split} 
	\underset{\substack{X^{1:\tau}, Y^{1:\tau}, S^{1:\tau}, D^{1:\tau},\\ U^{1:\tau},  V^{2:\tau}, O^{2:\tau}, L^{2:\tau}}}{\text{min.}} \
		 &\underset{\{F_r\in \mathcal{F}_r,F_c\in \mathcal{F}_c\} }{\text{max.}}\ \mathbb{E} \left[J\right]\\
		\text{s.t.}\ \text{\eqref{def_cons_mobility};}&\ \text{ \eqref{def_cons_charging};}\ \text{ \eqref{def_cons_moving};}\ \text{ \eqref{def_cons_mobility_unfairness},} 
	\end{split}
	\label{opt_final}
\end{align}
where $J$ is the final objective defined in \eqref{def_final_obj}, 
\begin{align}
    J = J_D(X^{1:\tau}, Y^{1:\tau})+\theta J_E(Y^{1:\tau}, c)
    \label{def_final_obj}
\end{align}
$\theta$ is a positive weighted parameter for a trade-off between $J_D$ and $J_E$. And $T_i^k = \mathcal{L}_T(Y^k)$ is a linear function of decision variables $Y^k$, $S_i^k = \mathcal{L}_S(X^{1:k}, c_{i}^{1:k-1})$ is a linear function of decision variables $X^{1:k} := \{ X^1, ..., X^k\}$ and supply uncertain parameter $c^{1:k-1}_i := \{c_i^1, ..., c_i^{k-1}\}$. We prove the following Lemma~\ref{lem_exist_bound}, there must exist a set of lower and upper bounds of the mobility supply-demand ratio in constraint~\eqref{def_cons_mobility_unfairness} to guarantee that there is a feasible solution for the proposed DRO problem~\eqref{opt_final}. 

\begin{lem}
\label{lem_exist_bound}
We can find at least one lower and one upper bound of the mobility supply-demand ratio such that the DRO problem~\eqref{opt_final} has at least one feasible solution. We name these bounds global bounds. And there exists a pair of lower and upper bounds $l^k$ and $h^k$ that no looser than global bounds as well as guarantee the DRO problem~\eqref{opt_final} has at least one feasible solution.
\end{lem}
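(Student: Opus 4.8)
The plan is to produce an explicit feasible point of \eqref{opt_final} for a judiciously chosen pair of ratio bounds, and then to observe that the bounds can be tightened without losing feasibility. The first observation is that the slack variables can be eliminated: for any choice of the here-and-now decisions $X^{1:\tau},Y^{1:\tau}$, the equalities \eqref{def_cons_mobility} and \eqref{def_cons_charging} determine $S^{1:\tau},V^{2:\tau},O^{2:\tau},L^{2:\tau}$ as affine images of $X^{1:\tau},Y^{1:\tau}$ (and of $r,c$), and these images are automatically nonnegative when $X^{1:\tau}=Y^{1:\tau}=0$, since the transition matrices, the initial fleet, and the supply are nonnegative. The only remaining restriction is \eqref{def_cons_mobility_unfairness}, whose two equations are solvable for real $D^k_i,U^k_i$ exactly when $l^k S^k_i \le r^k_i \le h^k S^k_i$ for every region $i$, every $k$, and every realization of $(r,c)$ in its support; in that case one simply takes $D^k_i=\sqrt{r^k_i-l^k S^k_i}$ and $U^k_i=\sqrt{h^k S^k_i-r^k_i}$.

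Next I would construct the candidate. Take $Y^{1:\tau}=0$ and choose $X^{1:\tau}$ so that every induced $S^k_i$ is strictly positive across the (compact) supports of $r$ and $c$: $X^{1:\tau}=0$ already suffices when the initial vacant fleet and the supply keep all $V^k_i$ positive, and otherwise one redistributes the available vacant vehicles along the reachability pattern $\{w_{ij}<m_1\}$. Propagating \eqref{def_cons_mobility}--\eqref{def_cons_charging} then yields well-defined nonnegative $S^{1:\tau},V^{2:\tau},O^{2:\tau},L^{2:\tau}$ with $S^k_i>0$. Since the supports are compact and the $S^k_i$ are bounded away from zero, $l^k:=\inf_{i,(r,c)} r^k_i/S^k_i$ and $h^k:=\sup_{i,(r,c)} r^k_i/S^k_i$ are finite, and by construction $l^k S^k_i\le r^k_i\le h^k S^k_i$ holds for all $i,k$ and all realizations, so the slack variables above are real. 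The resulting tuple is feasible for \eqref{opt_final}; I would call $(l^k,h^k)$ the global bounds, which settles the first claim. For the second claim, note that \eqref{def_cons_mobility_unfairness} is only relaxed by widening $[l^k,h^k]$, so any pair no looser than the global bounds (the global bounds themselves being the degenerate case) still admits this candidate; to obtain a strictly tighter feasible pair one replaces the candidate by a more balanced feasible dispatch whose induced ratios $r^k_i/S^k_i$ cluster nearer the global supply--demand ratio, and takes the narrower infimum and supremum of those ratios as $l^k$ and $h^k$.

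The step I expect to be the main obstacle is guaranteeing that the denominators $S^k_i$ stay strictly positive — in fact uniformly bounded away from zero — over the entire support of the supply $c$ while still respecting the moving constraints \eqref{def_cons_moving}; this is exactly the point at which a mild nondegeneracy hypothesis on the data enters (a nonempty initial fleet, nonnegative supply, and sufficient reachability among regions), and the coupling between \eqref{def_cons_moving} and the dynamics \eqref{def_cons_mobility}--\eqref{def_cons_charging} is the only place demanding genuine care. Everything else reduces to bookkeeping with the affine dynamics and the square-root substitution for the slack variables.
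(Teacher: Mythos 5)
Your proposal is correct and follows essentially the same route as the paper: both take the no-balancing (historical) configuration, use the extremes of its induced supply--demand ratios as the global bounds, and then tighten by intersecting with the ratio range of a feasible solution of the resulting problem. The paper is more concrete on the tightening step (it solves problem~\eqref{opt:lem} under the global bounds and sets $l^k,h^k$ via a max/min with the optimal solution's ratios), while you are more careful about eliminating the slack variables in~\eqref{def_cons_mobility_unfairness} and keeping the denominators $S^k_i$ positive over the support of $(r,c)$ --- a point the paper glosses over.
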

\begin{proof}
See Appendix~\ref{proof_exist_bound}.
\end{proof}
\vspace*{-10pt}
\begin{remark}[Novelty of problem formulation]
The DRO EV balancing problem formulation~\eqref{opt_final} decouples the mutual dependencies between EV supply and passenger demand. By putting the mobility supply-demand ratio fairness requirement in the constraints and the EV charging supply-demand ratio fairness in the objective, we make it possible to consider both charging and mobility fairness under two-sided uncertainties. We then derive a computationally tractable convex optimization form of problem~\eqref{opt_final} in Section~\ref{sec:theory}.
\end{remark}

\vspace*{-15pt}
\subsection{Generalization for Heterogeneous EV Fleet}

We generalize the DRO problem \eqref{opt_final} with homogeneous EVs (i.e. all vacant EVs have the same capacity or number of available seats) to a heterogeneous EV balancing problem formulation that considers EVs with different capacities. 

We denote the capacity of one EV as $C_{e}$, where $e$ (typ\textbf{e}) is the index of different types as $e = 1, \cdots, E$. $E$ is the total number of capacity types. For instance, $C_e$ is the number of seats for type $e$ EVs. We use $V_{e,i}^k$ to denote the number of vacant EVs with capacity $C_e$ in region $i$ at time $k$ before balancing. So the total number vacant EVs in region $i$ at time $k$ before balancing is $V_i^k := \sum_{e=1}^E V_{e,i}^k$. Similarly, we use $S^k_{e,i}, O^k_{e,i}, L^k_{e,i}$ to denote numbers of available, occupied, low-battery EVs with $C_e$ capacity, respectively. Then $x_{e,ij}^k$ is the number of type-$e$ vacant EVs that will be dispatched from region $i$ to region $j$ in time interval $k$. And $y_{e, ij}^k$ is the decision for type-$e$ low-battery EVs. Then the balancing cost constraints~\eqref{def_cons_moving} become:
\begin{align}
		x_{e,ij}^k \geq 0 \text{ and } x_{e,ij}^k = 0 \text{ when } w_{ij} \geq m_{e,1}; \nonumber\\
        y_{e,ij}^k \geq 0 \text{ and } y_{e,ij}^k = 0 \text{ when } w^*_{ij} \geq m_{e,2},
	\label{ge_bound}
\end{align}
because different types of EVs may have different moving constraints. For example, EVs with a smaller number of seats may use less energy or time to finish the same-length trip compared to EVs with a larger number of seats. Because they usually benefit from lighter cargo and metal frame, as well as less traffic constraints (due to height limit, size limit, lane limit, etc.). We can determine these constraints parameters by applying the comprehensive investigation results and methods in mobility and charging patterns of EVs from the literature~\cite{wang2019experience, wang2020faircharge}. And the mobility dynamic constraints~(4) become: for $e = 1, \cdots, E$,
\begin{align}
        S^k_{e,i} &= \sum\limits_{j=1}^{N}x^k_{e, ji}-\sum\limits_{j=1}^{N} x^k_{e, ij} + V_{e,i}^k,\quad k=1,\cdots,\tau; \nonumber\\
		\label{ge_trans} V^{k+1}_{e,i}&= \sum\limits_{j=1}^{N} P^k_{e,vji}S^k_{e,j}+ \sum\limits_{j=1}^{N} Q^k_{e,vji}O^{k}_{e,j} + c_{e,i}^k,\\ 
		 O^{k+1}_{e,i}&=\sum\limits_{j=1}^{N}P^k_{e,oji}S^k_{e,j}+ \sum\limits_{j=1}^{N} Q^k_{e,oji}O^k_{e,j}, k=1,\cdots,\tau-1,\nonumber
\end{align}
where $S^k_{e,i}$ is the total number of type-$e$ vacant EVs in region $i$ at time $k$, $c^k_{e,i}$ is the number of type-$e$ low-battery EVs in region $i$ that finish charging in time $k$. The charging dynamic constraint~\eqref{def_cons_charging} turns to:
\begin{align}
		L^{k+1}_{e,i} &=\sum\limits_{j=1}^{N} y^k_{e, ji}-\sum\limits_{j=1}^{N} y^k_{e, ij} + \sum\limits_{j=1}^{N} P_{e,lji}^k S^k_{e,j}. 
	\label{ge_con_charing}
\end{align}
The region transition matrices $P^k_{e, vij}, P^k_{e, oij}, P^k_{e, lij}$ denote the probability that an available type-$e$ EV moves from region $i$ at the beginning of time $k$ will transverse to region $j$ and being vacant, occupied, low-battery, respectively at the beginning of time $k+1$. Similarly, $Q^k_{e, vij}, Q^k_{e, oij}$ denote the probability that an occupied type-$e$ EV becomes vacant, occupied, respectively in the transition process.
The objective $J_E$ has a new form that:
\begin{align}
		J_E^{\prime} := \sum\limits_{k=1}^{\tau} \sum\limits_{i \in \sigma} \frac{\sum_{e=1}^Ec_{e,i}^k}{(\sum_{e=1}^E T_{e,i}^k)^a}. 
	\label{ge_quality_charging_one}
\end{align}
The objective $J_D$ also has a new form that:
\begin{align}
	J_D^{\prime} := \sum_{e=1}^{E}\sum\limits_{k=1}^{\tau} \sum\limits_{i=1}^{N} \sum\limits_{j=1}^{N} (x^k_{e,ij} w_{ij} + \beta y^k_{e, ij} w^*_{ij}).
	\label{ge_obj_dist}
\end{align}
The objective function $J_E^{\prime}$ defined in Equation~\eqref{ge_quality_charging_one} is linear (concave) over $c^k$, convex over $Y^{1:\tau}_{1:E}$. The objective function $J_D^{\prime}$ defined in Equation~\eqref{ge_obj_dist} is convex over $X^{1:\tau}_e$ and $Y^{1:\tau}_e$, $e = 1, \cdots, E$, since linear operation preserves convexity~[47]. The generalized balancing cost constraints~\eqref{ge_bound}, mobility dynamic constraints~\eqref{ge_trans} and charging dynamic constraints~\eqref{ge_con_charing} are linear of decision variables. So we have Lemma~2.

\begin{lem}
\label{lem_convex}
The generalized heterogeneous EV balancing problem~\eqref{ge_minmax}
\begin{align}
	\begin{split}
		\underset{\substack{X^{1:\tau}, Y^{1:\tau}, S^{1:\tau}, D^{1:\tau},\\ U^{1:\tau},  V^{2:\tau}, O^{2:\tau}, L^{2:\tau}}}{\text{min.}} \
		 &\underset{\{F_r\in \mathcal{F}_r,F_c\in \mathcal{F}_c\} }{\text{max.}}\mathbb{E} \left[ J_D^{\prime} + \theta J_E^{\prime} \right]\\
\text{s.t.}\quad  &\eqref{def_cons_mobility_unfairness}; \eqref{ge_bound}; \eqref{ge_trans}; \eqref{ge_con_charing}.
	\end{split}
	\label{ge_minmax}
\end{align}
is still convex with respect to the decision variables, and concave on uncertain parameters, where $J_E^{\prime}$ and $J_D^{\prime}$ are defined in~\eqref{ge_quality_charging_one} and \eqref{ge_obj_dist} respectively.
\end{lem}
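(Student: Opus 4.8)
Here I sketch the argument I would use to establish Lemma~\ref{lem_convex}.

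The plan is to check, term by term, that passing from the homogeneous problem~\eqref{opt_final} to the heterogeneous problem~\eqref{ge_minmax} preserves the two structural properties claimed: (i) the worst-case expected objective is convex in the decision variables, and (ii) the objective together with every defining constraint is concave --- in fact affine --- in the uncertain parameters $r$ and $c$. The key observation is that all newly introduced quantities enter only through the per-type aggregations $V_i^k=\sum_{e}V_{e,i}^k$, $S_i^k=\sum_{e}S_{e,i}^k$, $O_i^k=\sum_{e}O_{e,i}^k$, $L_i^k=\sum_{e}L_{e,i}^k$, $T_i^k=\sum_{e}T_{e,i}^k$ and $c_i^k=\sum_{e}c_{e,i}^k$, each of which is an affine (indeed linear) map of the per-type variables. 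Hence it suffices to re-examine the three places where any nonlinearity occurs: the charging-fairness objective $J_E'$, the fractional mobility constraint~\eqref{def_cons_mobility_unfairness}, and the worst-case expectation.

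First I would treat the objective. $J_D'$ in~\eqref{ge_obj_dist} is a nonnegative linear combination of the decision variables $x_{e,ij}^k,y_{e,ij}^k$ that does not involve $r$ or $c$, so it is simultaneously convex and concave in the decisions and trivially affine in the uncertain parameters. For $J_E'$ in~\eqref{ge_quality_charging_one}, fix $i\in\sigma$ and time $k$; the map $Y^{1:\tau}_{1:E}\mapsto \sum_{e}T_{e,i}^k$ is affine, and on its positive domain the scalar function $z\mapsto z^{-a}$ is convex for every $a>0$ (its second derivative $a(a+1)z^{-a-2}$ is positive), so the composition is convex in $Y^{1:\tau}_{1:E}$; multiplying by the nonnegative quantity $\sum_{e}c_{e,i}^k\ge 0$ and summing over $i,k$ preserves convexity, exactly as in the homogeneous argument behind~\eqref{def_charging_unfairness} (cf.\ Lemma~1 in~\cite{miao2021data}). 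For fixed decisions, $J_E'$ is linear in $c$, hence concave. Therefore $J_D'+\theta J_E'$ with $\theta>0$ is convex in the decision variables and affine in $(r,c)$, and since the expectation is a nonnegative linear operator, $\mathbb{E}_{r\sim F_r,c\sim F_c}[J_D'+\theta J_E']$ inherits convexity in the decisions for every fixed pair $(F_r,F_c)$ and remains affine in $(r,c)$.

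Next I would handle the constraints. The generalized balancing-distance constraints~\eqref{ge_bound}, the mobility-dynamics constraints~\eqref{ge_trans}, and the charging-dynamics constraint~\eqref{ge_con_charing} are, for each type $e$, affine equalities and inequalities in the decision variables in which the uncertain entries $c_{e,i}^k$ appear affinely; after substituting $S^k_{e,i}=\mathcal{L}_S(\cdot)$ and $T^k_{e,i}=\mathcal{L}_T(Y^k)$ they still define a polyhedron, so the feasible set is convex and the constraint functions are affine in $(r,c)$. The only constraint with a genuine nonlinearity is~\eqref{def_cons_mobility_unfairness}, which is untouched by the generalization: introducing the squared slacks $(D_i^k)^2,(U_i^k)^2$ is precisely the standard reformulation of the linear inequalities $l^kS_i^k\le r_i^k\le h^kS_i^k$, and since $D^{1:\tau},U^{1:\tau}$ do not appear in the objective, eliminating them shows the effective feasible region in $(X^{1:\tau},Y^{1:\tau},S^{1:\tau},V^{2:\tau},O^{2:\tau},L^{2:\tau})$ is again convex (polyhedral) and the constraint is affine in $r$.

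Finally I would close over the distributional uncertainty sets: for fixed decisions the map $(F_r,F_c)\mapsto\mathbb{E}_{r\sim F_r,c\sim F_c}[J_D'+\theta J_E']$ is linear, so $\sup_{F_r\in\mathcal{F}_r,\,F_c\in\mathcal{F}_c}\mathbb{E}[\,\cdot\,]$ is a pointwise supremum of functions each convex in the decision variables and is therefore itself convex in the decision variables; minimizing this convex function over the convex feasible region identified above shows that~\eqref{ge_minmax} is a convex program in the decisions, while the affinity established throughout gives concavity in the uncertain parameters $r,c$. I expect the only delicate point to be the $J_E'$ step --- verifying that aggregating the per-type net flows inside the fractional objective does not destroy convexity and that the relevant domain keeps $\sum_{e}T_{e,i}^k>0$ --- together with the remark that, despite its quadratic appearance, the squared-slack system~\eqref{def_cons_mobility_unfairness} carves out a convex region; everything else is routine bookkeeping of affine maps and the fact that nonnegative combinations, affine precomposition, expectation, and pointwise suprema all preserve convexity.
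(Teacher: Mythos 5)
Your proposal is correct and follows essentially the same route as the paper's proof: both verify convexity term by term using the standard preservation rules (affine precomposition, nonnegative weighted sums, convexity of $z\mapsto z^{-a}$ on $z>0$ for $a>0$, and linearity of the constraints), with the convexity of $J_E'$ via the affine map to $\sum_e T_{e,i}^k$ being the only nontrivial step in each. Your treatment is in fact slightly more complete than the paper's, since you also make explicit the concavity (affinity) in $(r,c)$, the pointwise-supremum argument for the max over $\mathcal{F}_r,\mathcal{F}_c$, and the observation that the squared-slack equalities in~\eqref{def_cons_mobility_unfairness} are just a reformulation of linear inequalities whose projection is convex --- points the paper's proof leaves implicit.
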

\begin{proof}
See Appendix~\ref{proof_lem_convex}.
\end{proof}

\section{Distributional Uncertainty Set Form and Construction Algorithm}
\label{sec:algorithm}

We design efficient algorithms to construct the demand and supply uncertainty sets $\mathcal{F}_r, \mathcal{F}_c$ of probability distributions defined in problem~\eqref{opt_final}, with historical data that contains information related to the true demand and supply distributions. The works in~\cite{delage2010distributionally} and~\cite{bertsimas2018data} use empirical estimates to construct the uncertainty set according to confidence regions of hypothesis testing in portfolio management problems. The method in~\cite{miao2021data} leverages the structure property of the covariance matrix to develop efficient uncertainty sets construction algorithms. However, the literature methods do not provide stable uncertainty set guarantee. Hence, we design a bootstrap-based algorithm that produces stable uncertainty set given different prediction models, and prove that the produced uncertainty set parameters are guaranteed to be contained in desired confidence regions with a given probability. 

\vspace*{-10pt}
\subsection{General Distributional Uncertainty Set}
Without loss of generality, we denote $z$ as a random vector variable and denote $\{ z_1, ..., z_n \}$ as a set of sample vectors; $f(o)$ as a prediction model of random variable $z$ and $\hat{z}$ as the prediction of $z$ by the model $f(o)$, i.e., $\hat{z} = f(o)$, where $o$ is the input data. We use $\delta$ to denote the prediction error which is the difference between the true value and the estimated value, i.e., $z = \hat{z} + \delta$. Therefore, $\delta$ is also a random vector variable. We define the following distributional uncertainty set construction problem for random variable $z$. We then calculate the uncertainty sets for both random demand $r$ and supply $c$ according to formulation~\eqref{set:uncertainty set}.
\begin{defn}
\label{def:problem}
Given a sample set of the random vector variable $z$, a prediction method $f(o)$, find values of $\hat{z}, \hat{\Sigma}$, $\hat{\omega}$ and $\hat{\gamma}$, such that with probability at least $1-\alpha$ the true distribution of $z$ is contained in the distributional uncertainty set~\eqref{set:uncertainty set}.
\begin{align}
\begin{split}
\label{set:uncertainty set}
\mathcal{F}_z(\hat{z}, \hat{\Sigma}, \hat\omega, \hat\gamma) = \{ z=\hat{z} + {\delta}: \\
\mathbb{E}[\delta]^T\hat{\Sigma}^{-1}\mathbb{E}[\delta] \leqslant \hat\omega, \mathbb{E}(\delta \delta^T) \preceq \hat\gamma \hat{\Sigma}\},\end{split}
\end{align}
where $\hat{\Sigma}$ is the estimate covariance of $\delta$, $\hat\omega$ and $\hat{\gamma}$ are two estimated constraints parameters.\end{defn} Definition~\eqref{set:uncertainty set} means that the distributional uncertainty set $\mathcal{F}_z$ relies on the estimated covariance matrix of prediction error $\delta$. The mean of $\delta$ is supposed to lie in an ellipsoid. The estimate covariance $\hat{\Sigma}$ lies in a positive semi-definite cone defined with a matrix inequality.

\begin{algorithm}
\caption{Uncertainty Set Parameters Estimation}
\label{alg: estimation}
 \KwData{a set of samples $Z$, bootstrap time $A$, bootstrap sample set size $M$, inner-bootstrap time $B$ and inner-bootstrap sample set size $N$, a significance level $\alpha$, a confidence parameter $\eta$, a prediction model $f$ and a observation set $O$.}
 \KwResult{Uncertainty Set Parameters and Confidence Regions}
 \While{i in 1 to A}{
    \While{j in 1 to B}{
        Re-sample $Z_j^i = \{ \tilde{z}_{j1}^i, ..., \tilde{z}_{jN}^i \}$ from $Z$ with replacement, estimate parameters of a prediction model $f_j^i(O)$, calculate the estimation residual set $\tilde{\Delta}_j^i = \{ \tilde{\delta}_{j1}^i, ..., \tilde{\delta}_{jN}^i \}$, where $\tilde{\delta}_{jk}^i = \tilde{z}_{jk}^i - \hat{z}_{jk}^i $. Compute its sample mean $\bar{\delta}_j^i$ and sample covariance $\bar{\Sigma}_j^i$.
    }
    Compute estimated covariance $\hat{\Sigma}^i = \frac{1}{B} \sum_{j = 1}^{B} \bar{\Sigma}_j^i$. \\
    \While{j in 1 to B}{
        Compute ${\omega}_{j}^i$ and ${\gamma}_{j}^i$ according to \eqref{def_omega} and \eqref{def_gamma} get ${\Omega}^i = \{ {\omega}_{1}^i, ..., {\omega}_{B}^i\}$ and ${\Gamma}^i = \{ {\gamma}_{1}^i, ..., {\gamma}_{B}^i\}$
    }
    Compute ${\omega}_{\alpha}^{i}$ from ${\Gamma}^{i}$ and ${\gamma}_{\alpha}^{i}$ from ${\Gamma}^{i}$\\
    \While{k in 1 to C}{
        Sampling with replacement from set ${\Omega}^i$ and ${\Gamma}^i$ to get bootstrap sample sets ${\Omega}^{ik}$ and ${\Gamma}^{ik}$\;
        Compute ${\omega}_{\alpha}^{ik}$ from ${\Omega}^{ik}$ and ${\gamma}_{\alpha}^{ik}$ from ${\Gamma}^{ik}$
    }
 }
Compute sample standard deviations: ${s}^i_\omega$, ${s}^i_\gamma$, $s_\omega$, $s_\gamma$, estimated parameters: $\hat{\omega}_{\alpha}$, $\hat{\gamma}_{\alpha}$, $\hat{\Sigma}$, quantiles: $q_{\eta/2}^\omega$ and $q_{1-\eta/2}^\omega$, $q_{\eta/2}^\gamma$ and $q_{1-\eta/2}^\gamma$
\end{algorithm}
\vspace*{-10pt}

\subsection{Uncertainty Set Construction Algorithm}
We develop Algorithm~\ref{alg: estimation} to solve Problem 1. Algorithm~~\ref{alg: estimation} computes the constraint parameters based on the bootstrap sample method~\cite{dixon2006bootstrap}. Given a pre-determined prediction model, historical data and observation data, in the outer loop, we sample with replacement from the historical sample set to train prediction model, then compute the set of prediction residual $\tilde\Delta$, i.e., realizations of prediction error, compute the sample mean and sample covariance. Then we compute constraint parameters by solving \eqref{def_omega} and \eqref{def_gamma}.
\begin{align}
    &{\omega}^i_{j} = [\bar{\delta}^i_j]^T(\hat{\Sigma}^{i})^{-1}[\bar{\delta}^i_j]
    \label{def_omega}\\
    &\underset{{\gamma}^i_{j}}{\text{min.}} {\gamma}^i_{j}\quad\text{s.t}\quad  \bar{\Sigma}^i_j \preceq {\gamma}^i_{j} \hat{\Sigma}^i,
    \label{def_gamma}
\end{align}
where $\bar{\delta}^i_j$ is a sample mean; $\bar{\Sigma}^i_j$ is a sample covariance; $\hat{\Sigma}^i = \sum_{j=1}^{N} \bar{\Sigma}_j^i$ is the estimated covariance in the $i$-th outer loop; $i \in \{1, 2, ..., A\},j \in \{1, 2, ..., B\}$ denote the outer loop and inner loop numbers, respectively. Now we compute the $\alpha$ percentiles for the sets $\Omega^i = \{ {\omega}^i_j\}_{j \in \{1, 2, ..., B\} }$, ${\Gamma}^i = \{ {\gamma}^i_j \}_{j \in \{1, 2, ..., B\} }$, which we denote as $\omega^i_{\alpha}, \gamma^i_{\alpha}$ respectively. We then sample with replacement from ${\Omega}^i$ and ${\Gamma}^i$ respectively to get constraint parameter sample sets ${\Omega}^{ik}$ and ${\Gamma}^{ik}$ for $C$ times, where $k \in \{ 1, ..., C\}$ denotes loop numbers. We also compute the $\alpha$ percentiles ${\omega}^{ik}_{\alpha}$ for $\Omega^{ik}$, ${\gamma}^{ik}_{\alpha}$ for $\Gamma^{ik}$.

Finally we get estimated constraint parameters $\hat{\omega}_{\alpha}$, $\hat{\gamma}_{\alpha}$ and estimated covariance $\hat{\Sigma}$ by \eqref{def_solve_problem}.
\begin{align}
    \hat{\omega}_{\alpha} = \sum_{i = 1}^{A} {\omega}_{\alpha}^i/A, 
    \hat{\gamma}_{\alpha} = \sum_{i = 1}^{A} {\gamma}_{\alpha}^i/A, 
    \hat{\Sigma} =  \sum_{i = 1}^{A} \hat{\Sigma}^i/A.
    \label{def_solve_problem}
\end{align}

Then we compute the $\eta/2$ and $1-\eta/2$ quantiles: $q_{\eta/2}^\omega$ and $q_{1-\eta/2}^\omega$ for $\{ \frac{{\omega}_{\alpha}^i - \hat{\omega}_{\alpha}}{{s}^i_\omega} \}_{i \in \{ 1,...,A\} }$, $q_{\eta/2}^\gamma$ and $q_{1-\eta/2}^\gamma$ for $\{ \frac{{\gamma}_{\alpha}^i - \hat{\gamma}_{\alpha}}{{s}^i_\gamma} \}_{i \in \{ 1,...,A\}}$, where ${s}^i_\omega$ and ${s}^i_\gamma$ are standard deviations on the set of $\{{\omega}_{\alpha}^{ik}\}_{k \in \{ 1, ..., C\}}$ and $\{{\gamma}_{\alpha}^{ik}\}_{k \in \{ 1, ..., C\}}$, respectively. We define the confidence regions of ${\omega}_{\alpha}$ and ${\gamma}_{\alpha}$: $[ \omega_l, \omega_u]$ and $[\gamma_l, \gamma_u]$, with the lower and upper bounds in \eqref{def_con_region},
\begin{align}
    \begin{split}
        \label{def_con_region}
        \omega_l = \hat{\omega}_{\alpha} - s_\omega q_{1-\eta/2}^\omega ,\quad
        \omega_u = \hat{\omega}_{\alpha} - s_\omega q_{\eta/2}^\omega ,\\
        \gamma_l = \hat{\gamma}_{\alpha} - s_\gamma q_{1-\eta/2}^\gamma ,\quad
        \gamma_u = \hat{\gamma}_{\alpha} - s_\gamma q_{\eta/2}^\gamma ,
    \end{split}
\end{align}
where $s_\omega$ and $s_\gamma$ are respectively standard deviations on the set of $\{{\omega}_{\alpha}^{i}\}_{i \in \{ 1, ..., A\}}$ and $\{{\gamma}_{\alpha}^{i}\}_{i \in \{ 1, ..., A\}}$. These two confidence regions are guaranteed to contain the true constraint parameters ${\omega}_{\alpha}$ and ${\gamma}_{\alpha}$ respectively, at the $1-\eta$ confidence level. 

Lemma~\ref{lem_alg} provides the theoretical support to the computing procedure of confidence regions. Thus, the produced uncertainty set parameters are guaranteed to be contained in some confidence regions with a given probability.

\begin{lem}
\label{lem_alg}
Given a pre-selected probability $1-\eta$, parameters calculated by 
Algorithm~\ref{alg: estimation} satisfy that ${\omega}_{\alpha}$ is contained between $\hat{\omega}_{\alpha} - s_\omega q_{\eta/2}^\omega$ and $\hat{\omega}_{\alpha} - s_\omega q_{1-\eta/2}^\omega$, ${\gamma}_{\alpha}$ is contained between $\hat{\gamma}_{\alpha} - s_\gamma q_{\eta/2}^\gamma$ and $\hat{\gamma}_{\alpha} - s_\gamma q_{1-\eta/2}^\gamma$.
\end{lem}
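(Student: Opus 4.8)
The plan is to recognize the two confidence regions in~\eqref{def_con_region} as studentized bootstrap (bootstrap-$t$) intervals and to establish their coverage by the classical pivotal-quantity argument. Since $\omega$ and $\gamma$ are handled symmetrically, I would present the $\omega$ case in full and remark that the $\gamma$ case is identical with~\eqref{def_gamma} in place of~\eqref{def_omega}. First I would fix notation: let $\omega_\alpha$ denote the $\alpha$-percentile of the true sampling distribution of the statistic $\omega=[\bar\delta]^{\top}\hat\Sigma^{-1}[\bar\delta]$ from~\eqref{def_omega}; the outer loop of Algorithm~\ref{alg: estimation} produces i.i.d.\ replicates $\omega_\alpha^i$ of the corresponding plug-in percentile estimator, $\hat\omega_\alpha=\tfrac1A\sum_i\omega_\alpha^i$ from~\eqref{def_solve_problem} is the point estimate, the nested $C$-loop yields for each $i$ a bootstrap standard-error estimate $s_\omega^i$ of $\omega_\alpha^i$, and $s_\omega$ is the analogous standard-error estimate for $\hat\omega_\alpha$. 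The object to track is the studentized statistic $T:=(\hat\omega_\alpha-\omega_\alpha)/s_\omega$ together with its bootstrap analogues $T_i:=(\omega_\alpha^i-\hat\omega_\alpha)/s_\omega^i$.

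Next I would invoke the bootstrap principle: under standard regularity conditions (sufficient smoothness of the functional $\omega(\cdot)$ of the underlying distribution, existence of the relevant moments, and validity of an Edgeworth expansion for the studentized statistic), the conditional law of $T_i$ given the data converges to the law of $T$, so the empirical $\eta/2$- and $(1-\eta/2)$-quantiles $q^{\omega}_{\eta/2},q^{\omega}_{1-\eta/2}$ of $\{T_i\}_{i=1}^A$ consistently estimate the matching quantiles of $T$. Consequently $\mathbb{P}\!\left(q^{\omega}_{\eta/2}\le T\le q^{\omega}_{1-\eta/2}\right)\to 1-\eta$. Substituting $T=(\hat\omega_\alpha-\omega_\alpha)/s_\omega$ and rearranging the two-sided inequality gives $\hat\omega_\alpha-s_\omega q^{\omega}_{1-\eta/2}\le \omega_\alpha\le \hat\omega_\alpha-s_\omega q^{\omega}_{\eta/2}$ with probability $1-\eta$, which is precisely the region $[\omega_l,\omega_u]$ defined in~\eqref{def_con_region}; the same chain of steps applied to $\gamma$ yields $\gamma_\alpha\in[\gamma_l,\gamma_u]$ at level $1-\eta$.

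The hard part will be justifying bootstrap consistency of the studentized statistic when the target $\omega_\alpha$ is itself a sample quantile: plug-in percentile estimators converge more slowly than smooth functionals and their bootstrap behavior is more delicate, so I would either impose a positive-density (continuity) assumption at the $\alpha$-percentile of the limiting distribution of $\omega$ and appeal to known results on the bootstrap for sample quantiles, or state the guarantee as an asymptotic coverage claim in the regime $A,B,C\to\infty$, emphasizing that the nested inner bootstrap is exactly what makes the studentization---and hence the first-order accuracy of the interval---legitimate. A secondary item to verify is that $s_\omega^i$, being the sample standard deviation of the resampled percentiles $\{\omega_\alpha^{ik}\}_{k=1}^C$, is a consistent estimate of the conditional dispersion of $\omega_\alpha^i$ and therefore the correct studentizing scale for $T_i$; this follows from consistency of the bootstrap variance estimate and needs no new idea.
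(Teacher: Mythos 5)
Your proposal is correct and follows essentially the same route as the paper's proof: identify the studentized pivot $(\hat\omega_\alpha-\omega_\alpha)/s_\omega$, invoke the bootstrap principle that its law is approximated by that of the resampled analogues whose empirical $\eta/2$ and $1-\eta/2$ quantiles define $q^{\omega}_{\eta/2},q^{\omega}_{1-\eta/2}$, and rearrange the two-sided inequality to obtain the interval in~\eqref{def_con_region}. The paper simply states the distributional closeness as an assumption with a citation, whereas you spell out the regularity conditions (and the subtlety of bootstrapping a percentile functional) needed to justify it; the substance of the argument is the same.
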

\begin{proof}
See Appendix~\ref{proof_lem_alg}
\end{proof}

\section{Computationally Tractable Form}
\label{sec:theory}
In this section, we derive the main theoretical result of this work: Theorem~\ref{thm:theory1}, a computationally tractable and equivalent convex optimization form for the distributionally robust optimization problem~\eqref{opt_final} via strong duality. The objective function~\eqref{def_final_obj} is convex over the decision variables and linear (concave) over the random parameter, with decision variables on the denominators. Though constraints are affine or convex over decision variables, most of them contain random parameters. This form is not a Linear Programming (LP)~\cite{vanderbei2015linear} or Semidefinite Programming (SDP)~\cite{yurtsever2021scalable}. The main process of deriving an equivalent convex optimization problem is to analyze~\eqref{def_final_obj} and constraints part. 
Based on Theorem~\ref{thm:theory1}, the optimal solution to~\eqref{opt_final} can be calculated in real time considering both passenger demand and EV supply uncertainties.

\begin{thm}
\label{thm:theory1}
	The distributionally robust optimization problem~\eqref{opt_final} with two distributional sets~\eqref{set:uncertainty set} is equivalent to the following convex optimization problem
	\begin{align}
	\label{obj_theory}
	&\min_{\substack{X^{1:\tau}, Y^{1:\tau}, D^{1:\tau}, U^{1:\tau};\\ S^{1:\tau}, V^{2:\tau}, O^{2:\tau},L^{2:\tau}; \\ {\lambda, Q_r,q_r,v_r,t_r,Q_c,q_c,v_c,t_c}}}
			\quad  H_o + v_r + t_r + v_c + t_c \nonumber\\
		\begin{split}
			&\text{s.t.}
			\quad  \begin{bmatrix}v_r & \frac{1}{2}(q_r+\lambda_U + \lambda_D)^T\\ \frac{1}{2}(q_r+\lambda_U + \lambda_D) & Q_r 
			\end{bmatrix} \succeq 0,\\
			\bigskip
			\bigskip
			&\quad \quad \begin{bmatrix}v_c & \frac{1}{2}(q_c+\lambda_V - Z)^T\\ \frac{1}{2}(q_c+\lambda_V - Z) & Q_c 
			\end{bmatrix} \succeq 0,\\
			\bigskip
			&\quad \quad t_r \geqslant (\hat\gamma_{r} \hat{\Sigma}_r+\hat{r}\hat{r}^T)\cdot Q_r + \hat{r}^T q_r\\ &\quad \quad \quad +\sqrt{\hat\omega_{r}} \|\hat{\Sigma}_r^{1/2} (q_r+2Q_r\hat{r})\|_2 ,\\
			\bigskip
			&\quad \quad t_c \geqslant (\hat\gamma_{c} \hat{\Sigma}_c+\hat{c}\hat{c}^T)\cdot Q_c + \hat{c}^T q_c\\ &\quad \quad \quad +\sqrt{\hat\omega_{c}} \|\hat{\Sigma}_c^{1/2} (q_c+2Q_c\hat{c})\|_2 ,\\
			\bigskip
			&\quad \quad Q_r, Q_c \succeq 0 ,
			\quad \quad \lambda, v_r, v_c \geq 0 ,\\
			\bigskip
			&\quad \quad z_i^k \geqslant \frac{1}{(T_i^k)^a}\\
			\bigskip
			&\quad \quad x_{ij}^k \geqslant 0 \text{ and } x_{ij}^k = 0 \text{ when } w_{ij} \geq m_1;\\
			\bigskip
            &\quad \quad y_{ij}^k \geqslant 0 \text{ and } y_{ij}^k = 0 \text{ when } w^*_{ij} \geq m_2,
		\end{split}
	\end{align}
	where $H_o=J_D -  ( \lambda_S^T f_S + \lambda_O^T f_O + \lambda_L^T f_L + \lambda_s^T S^{1:\tau} + \lambda_l^T L^{2:\tau}) - \lambda_D^T diag(lS^T - DD^T) - \lambda_U^T diag(hS^T + UU^T) - \sum\limits_{k = 1}^{\tau -1}(-V^{k+1}_i + \sum\limits_{j=1}^{N} P^k_{vji}S^k_j+ \sum\limits_{j=1}^{N} Q^k_{vji}O^k_j)\lambda_{V_i^k} $,
	$J_D$ is defined as~\eqref{obj_dist}.
\end{thm}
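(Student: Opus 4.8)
The plan is to obtain the convex program \eqref{obj_theory} from the DRO problem \eqref{opt_final} through a chain of value‑preserving transformations: an epigraph reformulation of the single fractional term, Lagrangian dualization of the constraints that carry the random parameters, and finally the Delage--Ye conic reformulation of the two resulting moment problems.

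First I would linearize $J_E$. The objective \eqref{def_final_obj} is affine in $c$ except for $\theta\sum_k\sum_{i\in\sigma} c_i^k/(T_i^k)^a$, so I introduce epigraph variables $z_i^k$ with $z_i^k\geqslant (T_i^k)^{-a}$ and replace that term by $\theta\sum_k\sum_{i\in\sigma} c_i^k z_i^k$. Since $a>0$ and $T_i^k=\mathcal L_T(Y^k)$ is linear, the set $\{(Y,z):z_i^k\geqslant (T_i^k)^{-a},\,T_i^k>0\}$ is convex, and because the worst‑case mean of each supply component is nonnegative, the outer minimization pushes $z_i^k$ down to $(T_i^k)^{-a}$, so the reformulation is exact and the objective is now jointly convex in the decisions and affine in $(r,c)$. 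Next I would dualize \eqref{def_cons_mobility}, \eqref{def_cons_charging} and \eqref{def_cons_mobility_unfairness} — exactly the constraints containing $r$ or $c$ — with multipliers $\lambda=(\lambda_S,\lambda_O,\lambda_L,\lambda_V,\lambda_s,\lambda_l,\lambda_D,\lambda_U)$, while keeping the deterministic moving constraints \eqref{def_cons_moving} explicit. Absorbing the squared slacks $D,U$ forces the sign conditions collected as $\lambda\geqslant 0$, and a routine bookkeeping of multipliers leaves a partial Lagrangian of the form $H_o + g_r(r) + g_c(c)$ with $H_o$ as in the theorem statement and $g_r,g_c$ affine. The crucial structural point — and the payoff of putting mobility fairness into the constraints and charging fairness (as a fraction) into the objective — is that $r$ and $c$ never multiply each other, so the Lagrangian is \emph{separable} in $(r,c)$ and concave (indeed affine) in each block. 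Invoking strong Lagrangian duality (the problem is convex in the decisions and concave in $(r,c)$, with a Slater point supplied by Lemma~\ref{lem_exist_bound}), I exchange the minimization, the maximization over $(F_r,F_c)$, and the dual minimization over $\lambda$, so the inner problem becomes $H_o + \max_{F_r\in\mathcal F_r}\mathbb E[g_r(r)] + \max_{F_c\in\mathcal F_c}\mathbb E[g_c(c)]$.

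Each inner problem is now a moment problem over an ambiguity set of the form \eqref{set:uncertainty set}, so I apply the conic‑duality reformulation of \cite{delage2010distributionally}: for affine $g$, $\max_{F\in\mathcal F_z}\mathbb E[g(z)]$ equals the minimum of $v+t$ over $(Q\succeq 0,\,q,\,v\geqslant 0,\,t)$ subject to the Schur‑complement condition encoding $v+q^\top z+z^\top Q z\geqslant g(z)$ for all $z$, together with $t\geqslant(\hat\gamma\hat\Sigma+\hat z\hat z^\top)\cdot Q+\hat z^\top q+\sqrt{\hat\omega}\,\|\hat\Sigma^{1/2}(q+2Q\hat z)\|_2$; strong duality here uses $\hat\Sigma\succ 0$, $\hat\omega\geqslant 0$ and $\hat\gamma>1$, which give the ambiguity set nonempty interior. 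Instantiating this with $g=g_r$ (linear coefficient built from $\lambda_U+\lambda_D$) and $g=g_c$ (linear coefficient involving $\lambda_V$ and the matrix $Z$ of the $z_i^k$) yields precisely the two semidefinite constraints and the two $t_r,t_c$ inequalities of \eqref{obj_theory}. Collecting everything produces \eqref{obj_theory}, and I would close by verifying it is a genuine convex program — the objective is linear in the newly introduced block $(\lambda,Q_r,q_r,v_r,t_r,Q_c,q_c,v_c,t_c)$, the matrix inequalities and the $\|\cdot\|_2$ inequalities are convex, $z_i^k\geqslant (T_i^k)^{-a}$ is convex, and the remaining terms in $H_o$ are convex in the joint variables — and by noting that every step preserved the optimal value and lets an optimal $(X^{1:\tau},Y^{1:\tau})$ of \eqref{opt_final} be recovered.

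The main obstacle I expect is carrying out the two strong‑duality interchanges in tandem with the verification that, after dualization, the maximand is separable and concave in $(r,c)$: one must check that the Lagrangian swap is legitimate for a problem whose "concave part" only materializes once the squared slacks are absorbed, and that the Delage--Ye moment duality holds with no gap for each of the (data‑dependent, hence only generically strictly feasible) ambiguity sets. The attendant bookkeeping — which multiplier attaches to which constraint, and the resulting sign restrictions — is tedious but routine once the separability is established.
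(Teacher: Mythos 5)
Your proposal is correct and follows essentially the same route as the paper's proof: epigraph linearization of the fractional charging term via $z_i^k \geqslant (T_i^k)^{-a}$ so the objective becomes affine in $c$, Lagrangian dualization (with Slater/strong duality) of the constraints carrying $r$ and $c$, separation of the resulting affine maximand into independent $r$- and $c$-moment problems, and the Delage--Ye conic reformulation yielding the Schur-complement LMIs and the $t_r, t_c$ inequalities. The only differences are cosmetic (you perform the epigraph step before dualizing, and you state the regularity conditions for the moment duality slightly more explicitly), so no further comparison is needed.
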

\begin{proof}
See Appendix~\ref{proof_theorem1}.
\end{proof}
This convex optimization problem~\eqref{obj_theory} has a objective which is a linear function of decision variables $v_r$, $t_r$, $v_c$, $t_c$, $X^{1:\tau}$, $Y^{1:\tau}$, $S^{1:\tau}$, $V^{2:\tau}$, $O^{2:\tau}$, $L^{2:\tau}$ and $\lambda$; as well as a quadratic function of decision variables $D^{1:\tau}$ and $U^{1:\tau}$. It includes linear matrix inequalities constraints, linear norm inequalities constraints and other convex constraints.


\begin{remark}[Computational Complexity]
In the DRO EV balancing problem, every uncertain parameter is in a convex and compact support set. Each support set is equipped with an oracle which has the following properties. First, given any value of the uncertain parameter, within polynomial time (with respect to the dimension of the uncertain parameter), the oracle can (i) either confirm the value is within the support set; (ii) or provide a hyperplane that separates the value from the support set \cite{grotschel1981ellipsoid}.
Second, given all parameters and variables, the value of $J_D$ and $J_E$ can be evaluated in polynomial time. Notice that the problem \eqref{obj_theory} has a linear objective function and convex constraint functions over decision variables. Hence, Proposition 1 in~\cite{delage2010distributionally} can be applied and the problem \eqref{obj_theory} can be solved to any precision $\epsilon$ within polynomial time of $\log(1/\epsilon)$ and the size of the problem. This equivalent computationally tractable convex optimization
problem can be solved by existing convex optimization
problem solvers such as CVXPY \cite{agrawal2018rewriting}, CVXOPT
\cite{andersen2013cvxopt}. For instance, CVXOPT converts a problem into its equivalent standard form known as conic form, and provide polynomial
interior-point algorithms \cite{nesterov1994interior} to solve it.
\end{remark}

\vspace*{-10pt}
\color{black}

\vspace{-5pt}
\section{Experiment}
\vspace{-5pt}
\label{sec:experiment}


In this section, we evaluate the performance of the designed distributionally robust optimization-based EV balancing method with electric taxi (e-taxi) data from the Chinese city Shenzhen (one of the largest cities in China, which operates over 10,000 E-taxis). Six-week real-world data is utilized in the experiment, which includes 60GB EV GPS data, 5.5 GB transaction data from over 10,000 EVs. We split the whole dataset 67\%-33\% between the training set and the testing set. This 2:1 ratio is commonly used in train-test split for evaluating machine learning algorithms. Even though two weeks seem short for testing compared to the data used in other machine learning methods, the data size is large (20GB GPS and 1.9 GB transaction data involving over 10,000 EVs). \textcolor{black}{Due to security and privacy regulations and compliance, these datasets are the best we can get.} To our knowledge, few existing EV work utilized such large-scale data from large-scale EV deployments for algorithm evaluation.  \textcolor{black}{In Fig. \ref{fig_summary}, there is a high-level illustrative flow graphic to describe the proposed solution framework to help readers understanding the experimental procedures.}
\begin{figure}
	\centering
	\includegraphics [width=0.4\textwidth]{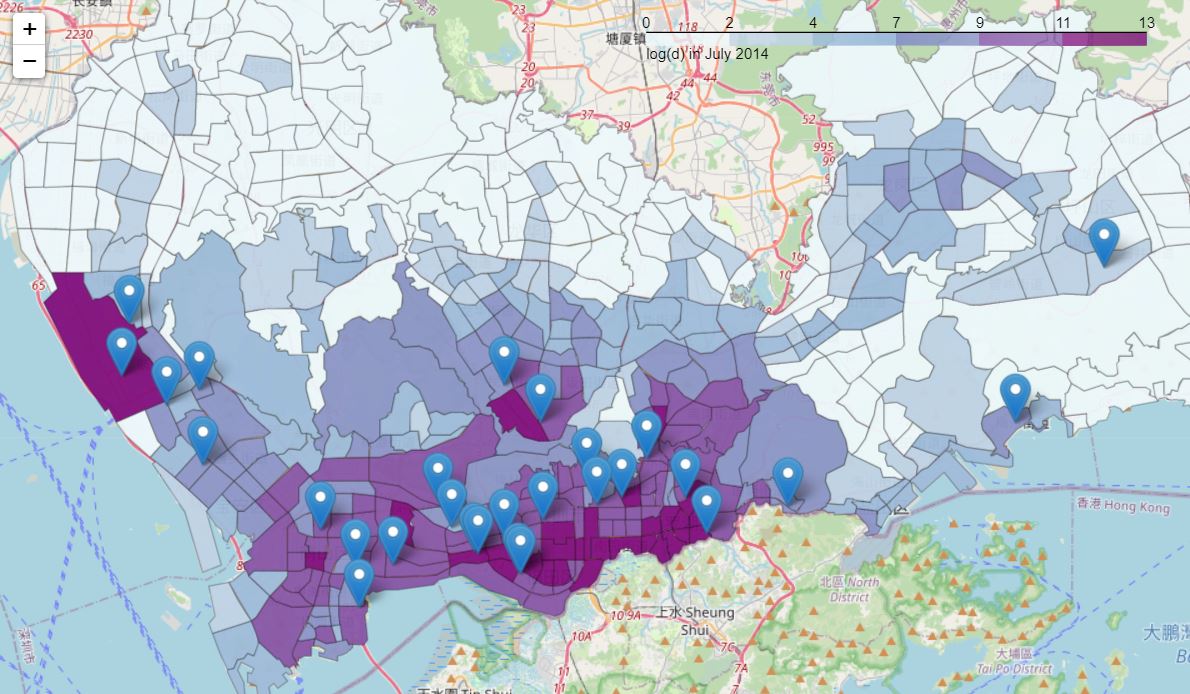}
	\vspace{-10pt}
	\caption{Heat map of demand in Shenzhen City: lighter means less demand. }
	\label{fig_heat_demand}
	\vspace{-10pt}
\end{figure}

\vspace*{-10pt}
\subsection{Data Description}
\vspace*{0pt}
\begin{table*}[!ht]\centering
\vspace*{0pt}
\caption{An Example of the four datasets}
\vspace*{-6pt}
\footnotesize
\begin{tabular}{ccccccc}
\hline
{GPS Data} & plate ID & longitude & latitude & time & speed (km/h) \\ \cline{2-6}
 & BDXXXX & 114.0121 & 22.526104 & 2015-08-16 08:30:43 & 35 \\ \hline \hline
{Transaction Data} & plate ID & pickup time & dropoff time & pickup location & travel distance (m) \\ \cline{2-6}
 & BDXXXX & 2015-09-03 13:47:58 & 2015-09-03 13:57:23 & (113.9867, 22.5433) & 6954 \\ \hline \hline
{Charging Station Data}& station ID & station name & longitude & latitude & number of charging ports \\ \cline{2-6}
 & 30 & NB0005 & 113.9878608 & 22.55955418 & 40 \\ \hline \hline
{Urban Partition Data}& Region ID & Longitude1 & Latitude1  & Longitude2 & Latitude2 \\ \cline{2-6}
 & 1 &114.31559657&22.78559093&114.311230763&22.78220351 \\ \hline
\end{tabular}
\label{tab_data}
\vspace{-10pt}
\end{table*}
In total, there are four different datasets used in this paper, including 
E-taxi GPS data (vehicle ID, locations, time and speed, etc), 
Transaction data (vehicle ID, pick-up and drop-off time, pick-up and drop-off location, travel distance, etc), 
charging station data (locations, name, the number of charging ports, etc), and 
city partition data (geographic boundaries of 491 small separate regions composing Shenzhen). Our datasets are obtained by collaborating with the Shenzhen Transportation Committee for its smart city initiative. An example of these four datasets is shown in the Tab.~\ref{tab_data}.

\textbf{GPS Data} include fields of the status of taxis, e.g., the vehicle ID, the GPS location longitude and latitude information, time-stamp, direction, current speed, etc.
\textbf{Transaction Data} describe each transaction information, e.g., vehicle ID, pick-up/drop-off time, pick-up/drop-off location (longitude \& latitude), and travel distance, etc.
\textbf{Charging Station Data} include the locations (i.e., GPS) of each charging station, station name, and the number of charging ports in each station, etc.
\textbf{Urban Partition Data} describe the urban partition for population census of the Shenzhen city. There are 491 regions in total, and each region has a region ID and longitudes \& latitudes of its boundary. 

Fig.~\ref{fig_heat_demand} is a heat map of demand in Shenzhen city. It shows the unbalanced distribution of mobility demand. There is higher passenger demand in downtown and airport areas than in suburb areas. This realistic situation verifies the necessity to provide fair service in EV AMoD systems. These blue markers in the Fig.~\ref{fig_heat_demand} denote charging station locations. The unbalanced charging stations distribution also enhances our motivation to consider the charging fairness in EV AMoD balancing problems.

\vspace{-10pt}
\subsection{Data Processing}

For the host machine and software used to clean and manipulate the original datasets, we utilize a 34 TB Hadoop Distributed File System (HDFS) on a cluster consisting of 11 nodes, each of which is equipped with 32 cores and 32 GB RAM. To make this data fit within the context of our problem formulation, we further clean our datasets according to several principles including continuity of time, homogeneity in data distributions, uniformly time granularity, etc. EV mobility and charging patterns is also considered in the filtering process and a comprehensive investigation of the process has been introduced in the literature~\cite{wang2020understanding}.

To merge multi-source datasets, we first select the electric taxis with complete GPS and the transaction records in the dataset. Then we insert the departure and arrival of each transaction as new GPS timestamps and have area information labelled, building an aligned timeline for each taxi. We further utilize a widely adopted spatiotemporal constraint-based method in the literature \cite{wang2019sharedcharging, tian2016real} to infer the charging activities taken by taxis based on their individual timeline. We also define 7 mutually exclusive labeling codes for marking timestamps' riding or charging status, based on which we can obtain the statistics of EV activities in both spatial and temporal dimensions.


\color{black}

\begin{figure}[h]
	\centering
	\vspace{-15pt}
	\includegraphics [width=0.5\textwidth]{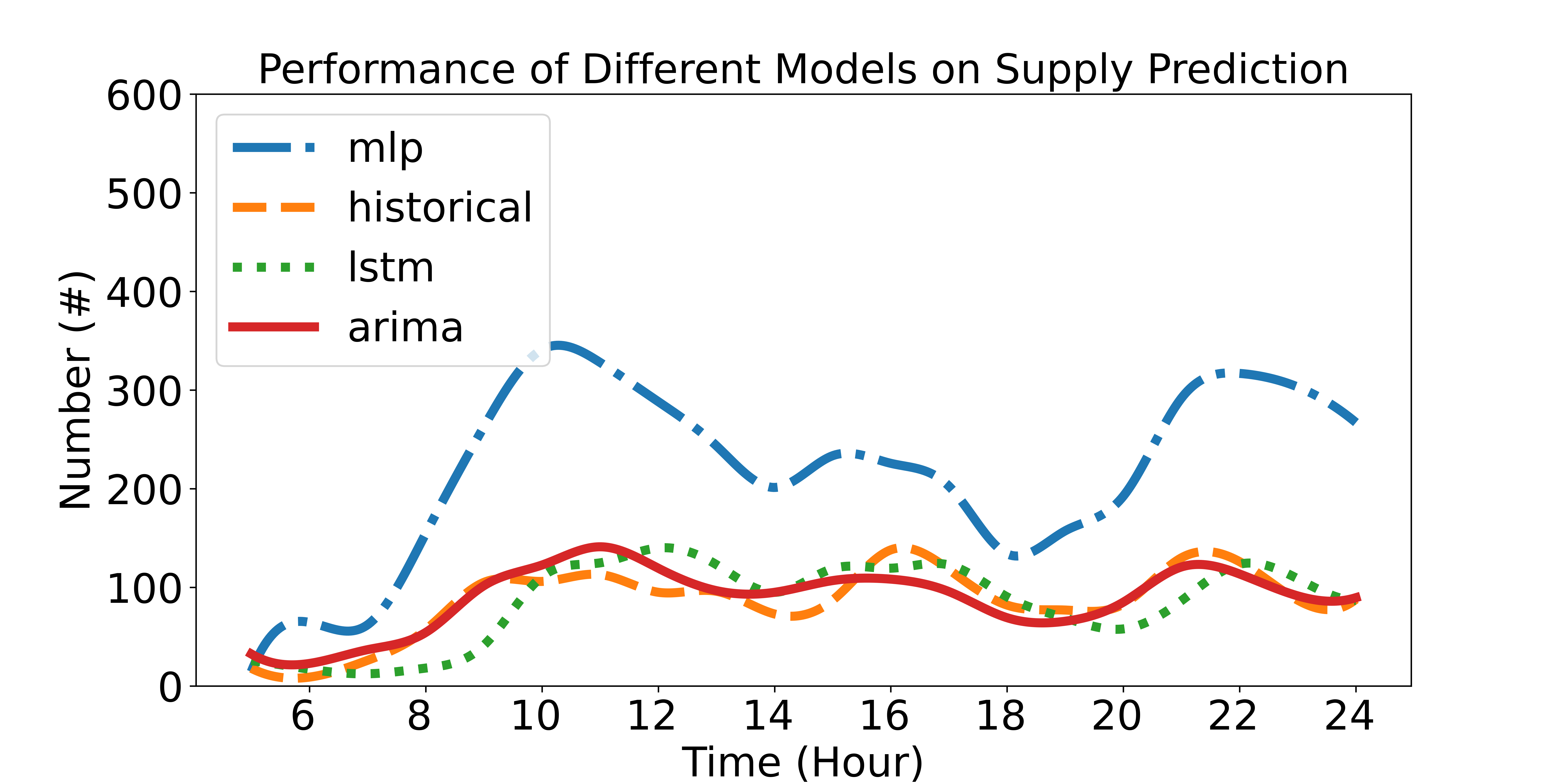}
	\vspace{-22pt}
	\caption{The performance of ARIMA model in predicting supply and demand is better than MLP and LSTM models. So we decide ARIMA model as the prediction model $f$ when using Algorithm~\ref{alg: estimation} to construct uncertainty sets.}
	\label{fig:arima_nn}
	\vspace{-15pt}
\end{figure}

\begin{figure}[!b] \centering
\vspace*{-15pt}
\includegraphics[width=0.5\textwidth, keepaspectratio=true]{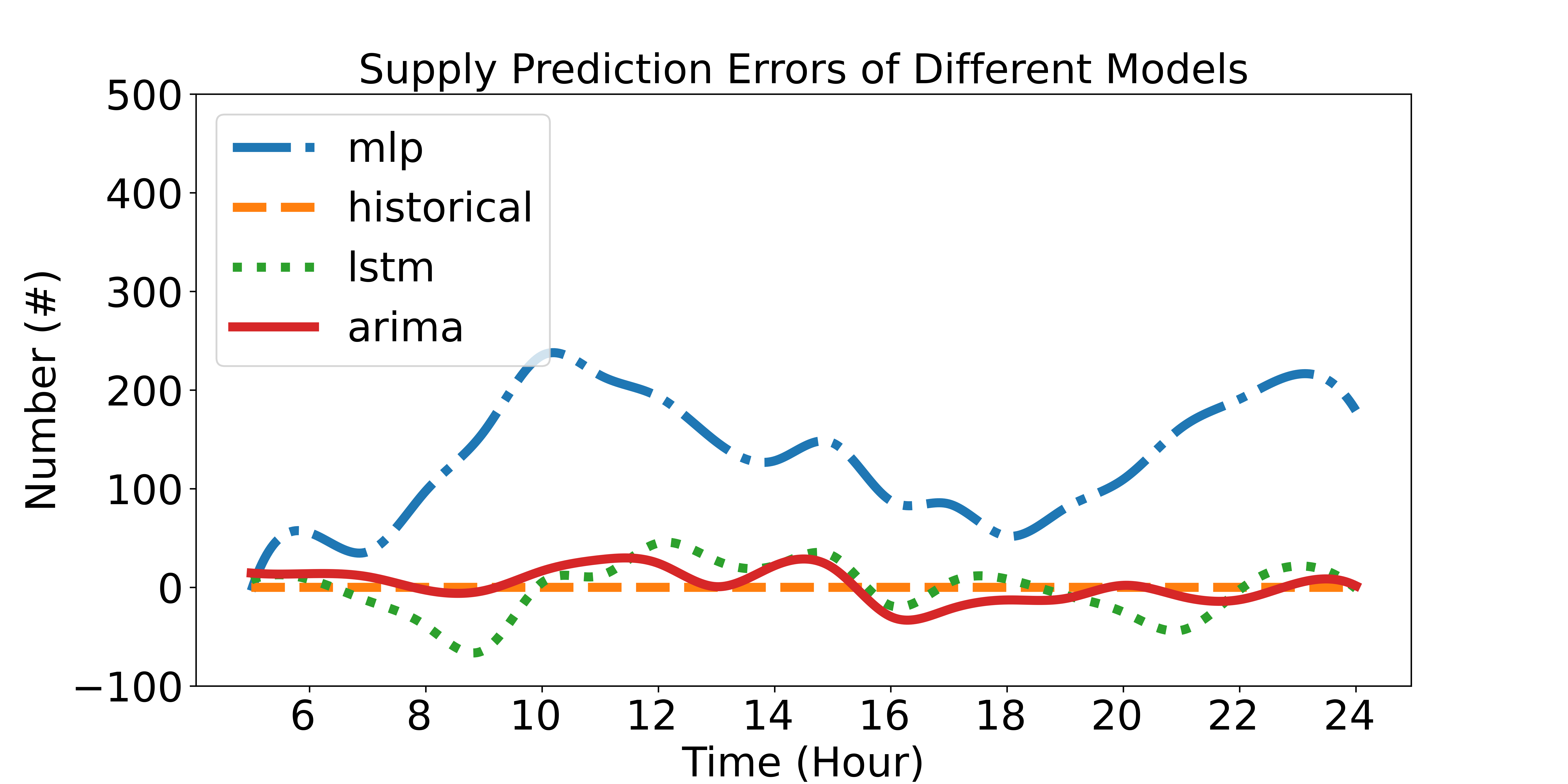}
\vspace*{-22pt}
\caption{The ARIMA model has the lowest supply prediction errors.}
\vspace*{-10pt}
\label{fig_error}
\end{figure}
\subsection{Prediction Model}
We decide which prediction model to be used by comparing the performance of these three models: Long Short-Term Memory model (LSTM)~\cite{du2019deep}, AutoRegressive Integrated Moving Average model (ARIMA)~\cite{giraka2020short} and Multi-Layer Perceptron model (MLP)~\cite{abbasi2021deep}. In Fig.~\ref{fig:arima_nn}, the three models all show the general trend of supply in one day. But ARIMA model performs better than other two models in term of prediction accuracy. In Fig.~\ref{fig_error}, the prediction errors for supply at each hour are reported. MLP model has the largest prediction errors since the blue curve is almost always higher than all the other curves. The curve of LSTM model is close to the ground truth (the zero horizon line) but still has larger errors than the ARIMA model. To quantitatively compare the performance of these models, we further provide the mean square error (MSE) in Tab. \ref{tab_mse}. MSE is the average squared difference between the predicted values and the historical values. It measures the prediction quality of a model and a lower MSE indicates a better accuracy. We can see that ARIMA model achieves the smallest MSE. Hence, we decide ARIMA model as the prediction model $f$ when using Algorithm~\ref{alg: estimation} to construct uncertainty sets. In Fig.~\ref{fig:arima}, we compare ARIMA model's predicted values and historical values of supply and demand on one day in region $2$. ARIMA model demonstrates supply and demand's time trends very well and the predicted values of demand in peak hours: 8am-10am, 2pm-4pm, 8pm-10pm, are close to historical values.

\vspace*{-5pt}
\subsection{Distributional Uncertainty Set}
Tab.~\ref{tab_compare_val} shows how the constraint parameters $\hat\omega_{c}$, $\hat\gamma_{c}$ change for different parameter $B$ of Alg.~\ref{alg: estimation}. The values of $\hat\omega_{c}$ and $\hat\gamma_{c}$ decrease when $B$ increases and the speed of decreasing turns slower as $B$ becoming larger. We also notice that as $B$ increases, the corresponding estimated constraint parameters $\hat\omega_{c}$ and $\hat\gamma_{c}$ have a trend to converge to a certain constant that meets bootstrapping algorithm's intuition. Here one time interval is $1$ hour, the time horizon $\tau$ is $2$, the significant $\alpha = 0.25$ and the true probability distributions of demand and supply variables $r$, $c$ are separately contained in the constructed distributional uncertainty sets with probability $75\%$.

\begin{table}[ht]\centering
\vspace*{-10pt}
\caption{Thresholds $\hat{\omega}_{c}$ and $\hat{\gamma}_{c}$ for Different Parameters $B$}
\vspace*{-6pt}
\begin{tabular}{c|c|c|c|c|c|c}
\hline
$B$     & 10    & 20    & 50    & 100   & 500   & 1000  \\ \hline
$\hat\omega_{c}$ & $1.504$ & $0.964$ & $0.576$ & $0.399$ & $0.296$ & $0.176$ \\ \hline
$\hat\gamma_{c}$ & $3.715$ & $2.832$ & $2.006$ & $1.768$ & $1.374$ & $1.317$ \\ \hline
\end{tabular}
\label{tab_compare_val}
\vspace{-5pt}
\end{table}



\begin{figure}[!t]
	\centering
	\vspace*{-10pt}
	\includegraphics [width=0.5\textwidth]{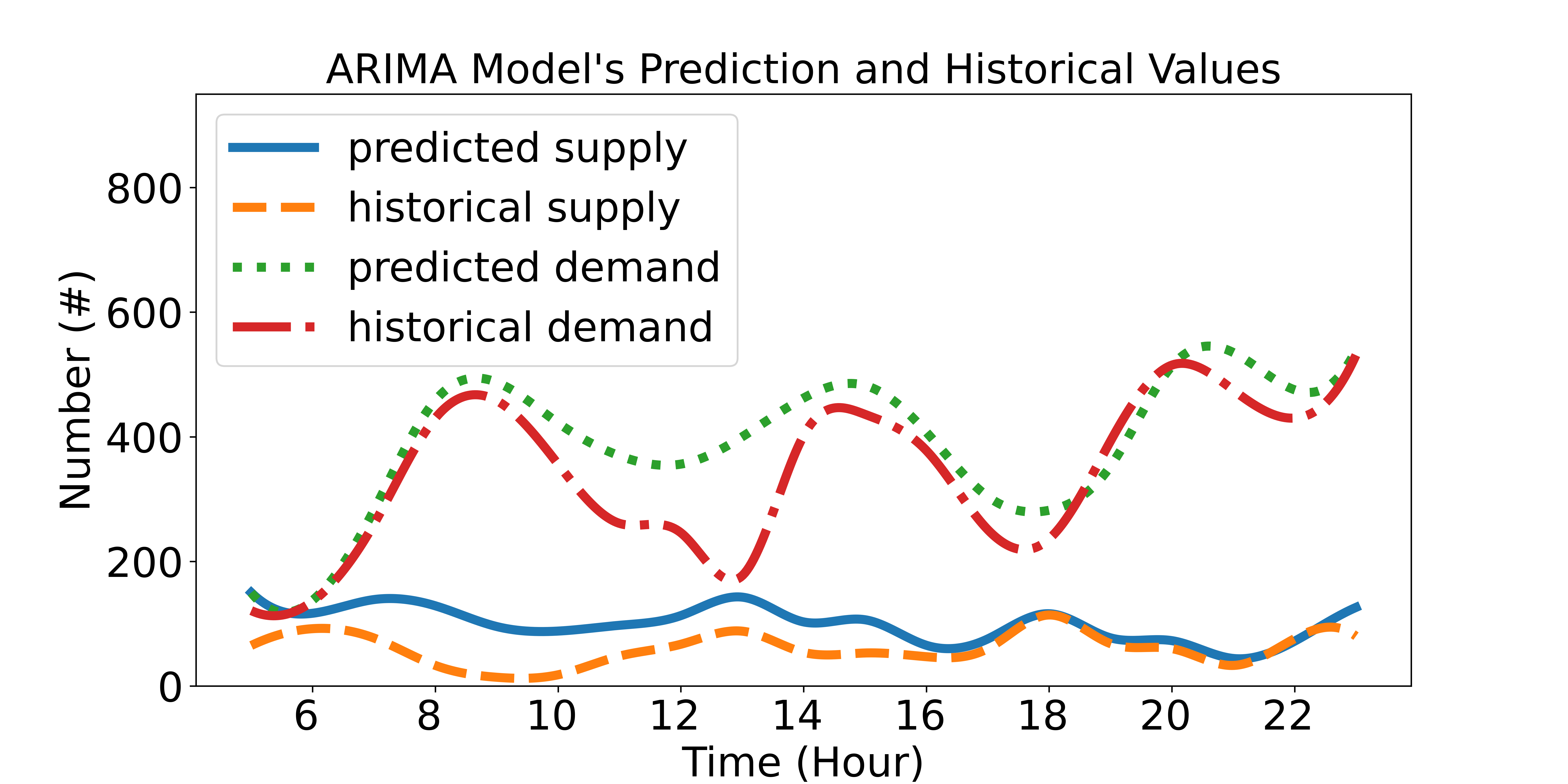}
	\vspace{-22pt}
	\caption{ARIMA model demonstrates time trends very well.}
	\label{fig:arima}
	\vspace{-15pt}
\end{figure}

\begin{figure}
	\centering
	\vspace*{-5pt}
	\includegraphics [width=0.5\textwidth]{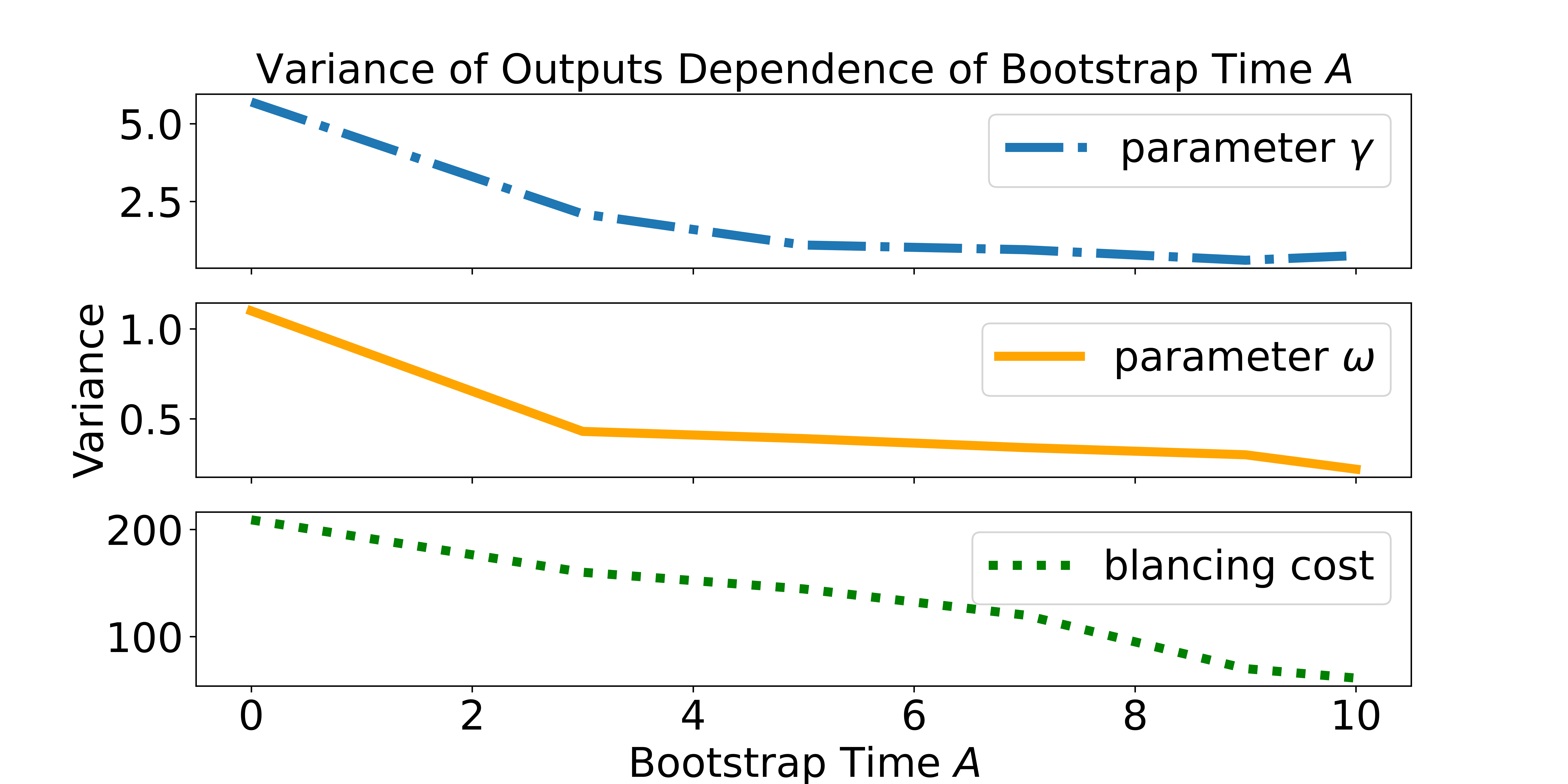}
	\vspace{-22pt}
	\caption{The variances of constraint parameters/balancing cost are decreased as the bootstrap time $A$ increasing.}
	\label{fig_variance}
	\vspace{-5pt}
\end{figure}

\begin{figure}[!ht]
	\centering
	\vspace*{-10pt}
	\includegraphics [width=0.5\textwidth]{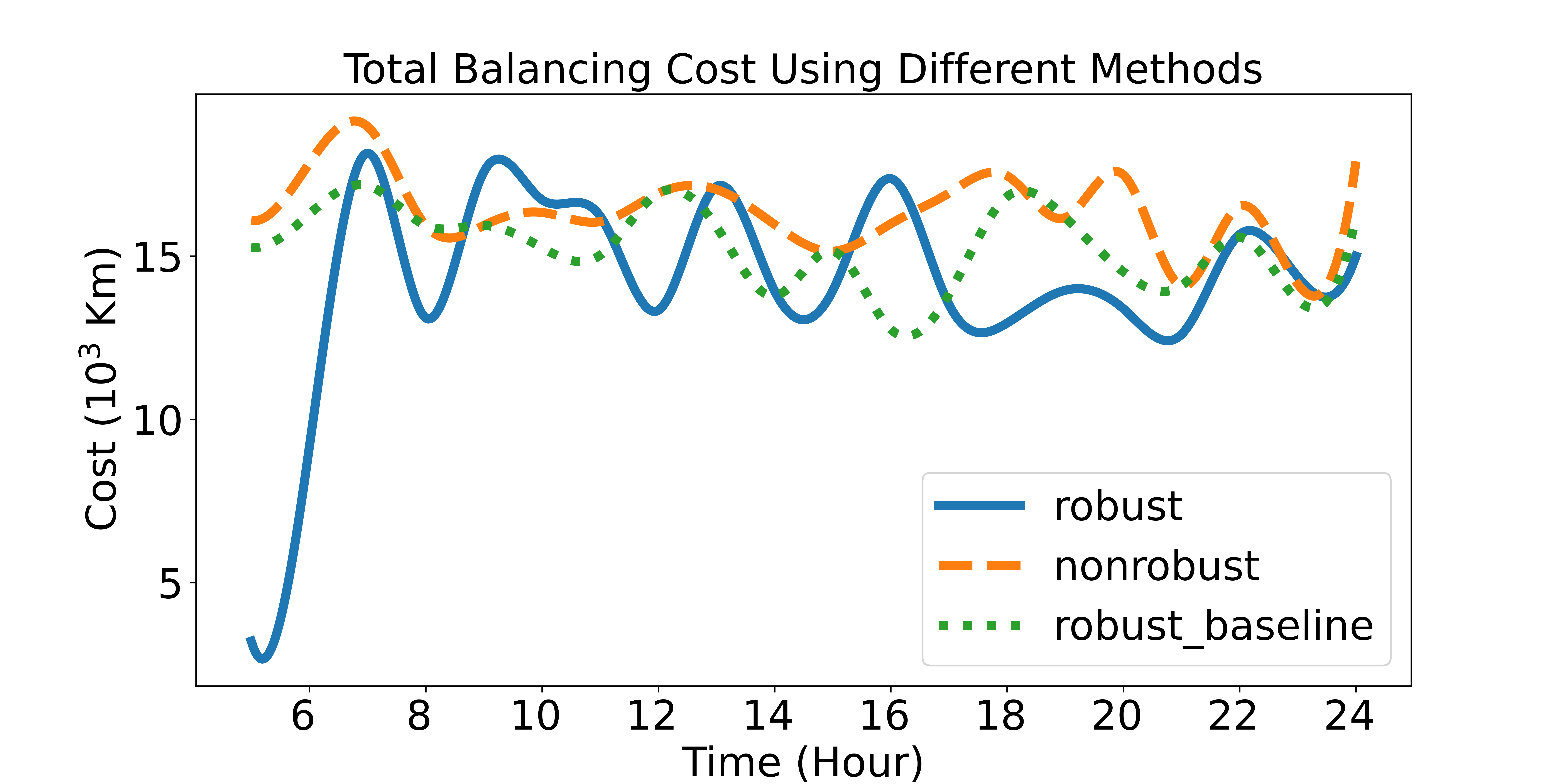}
	\vspace{-22pt}
	\caption{By using our DRO method, the average total balancing cost is reduced by 14.49\% compared to non-robust method.}
	\label{fig:total_cost}
	\vspace{-10pt}
\end{figure}

\begin{figure}
	\centering
	\vspace*{-10pt}
	\includegraphics [width=0.5\textwidth]{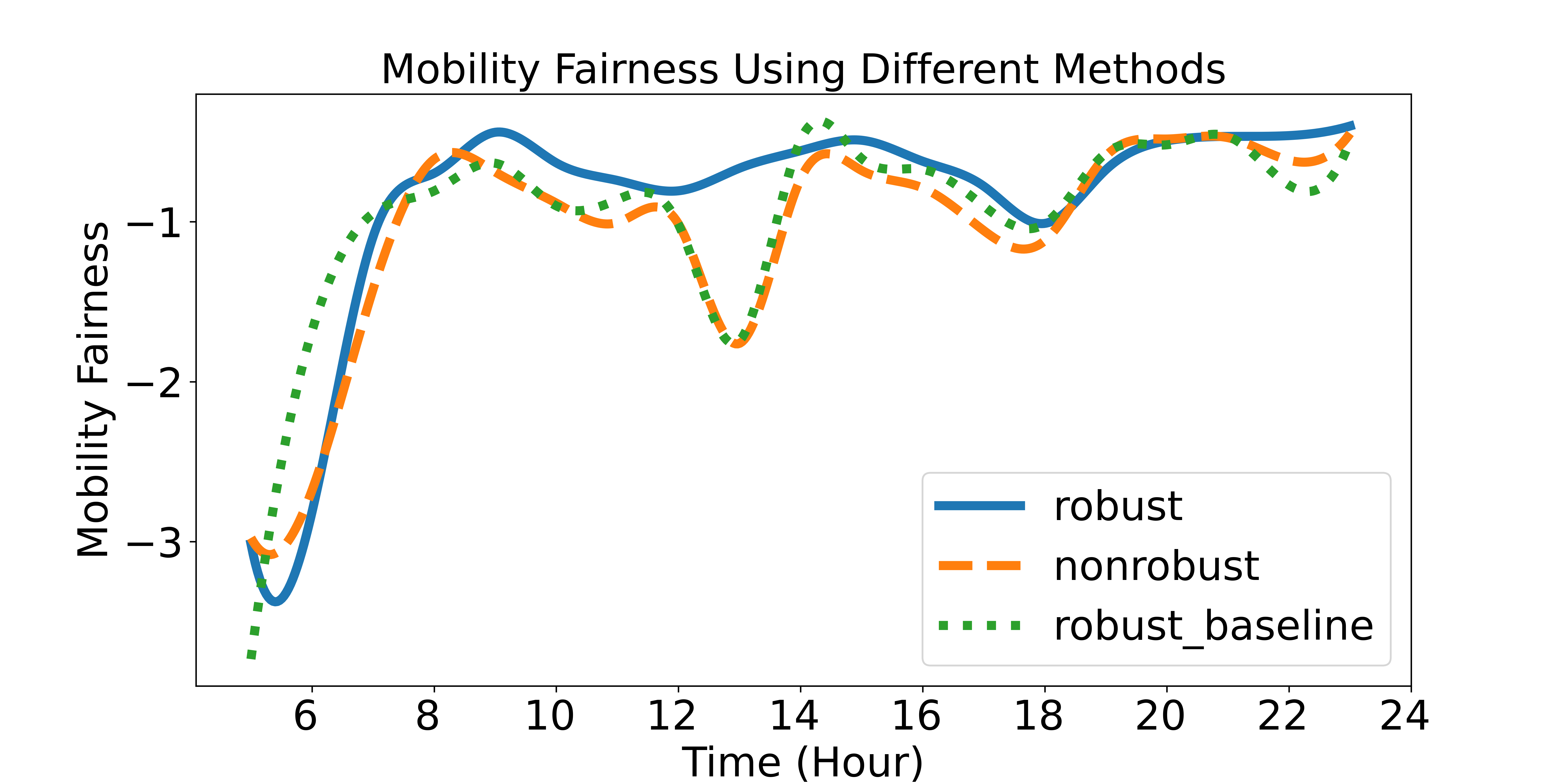}
	\vspace{-20pt}
	\caption{By using our DRO method, the average fairness of mobility supply-demand ratio is improved by 15.78\% compared to non-robsut method.}
	\label{fig:fairm}
	\vspace{-10pt}
\end{figure}

\begin{figure}[!hb]
	\centering
	\vspace{-5pt}
	\includegraphics [width=0.5\textwidth]{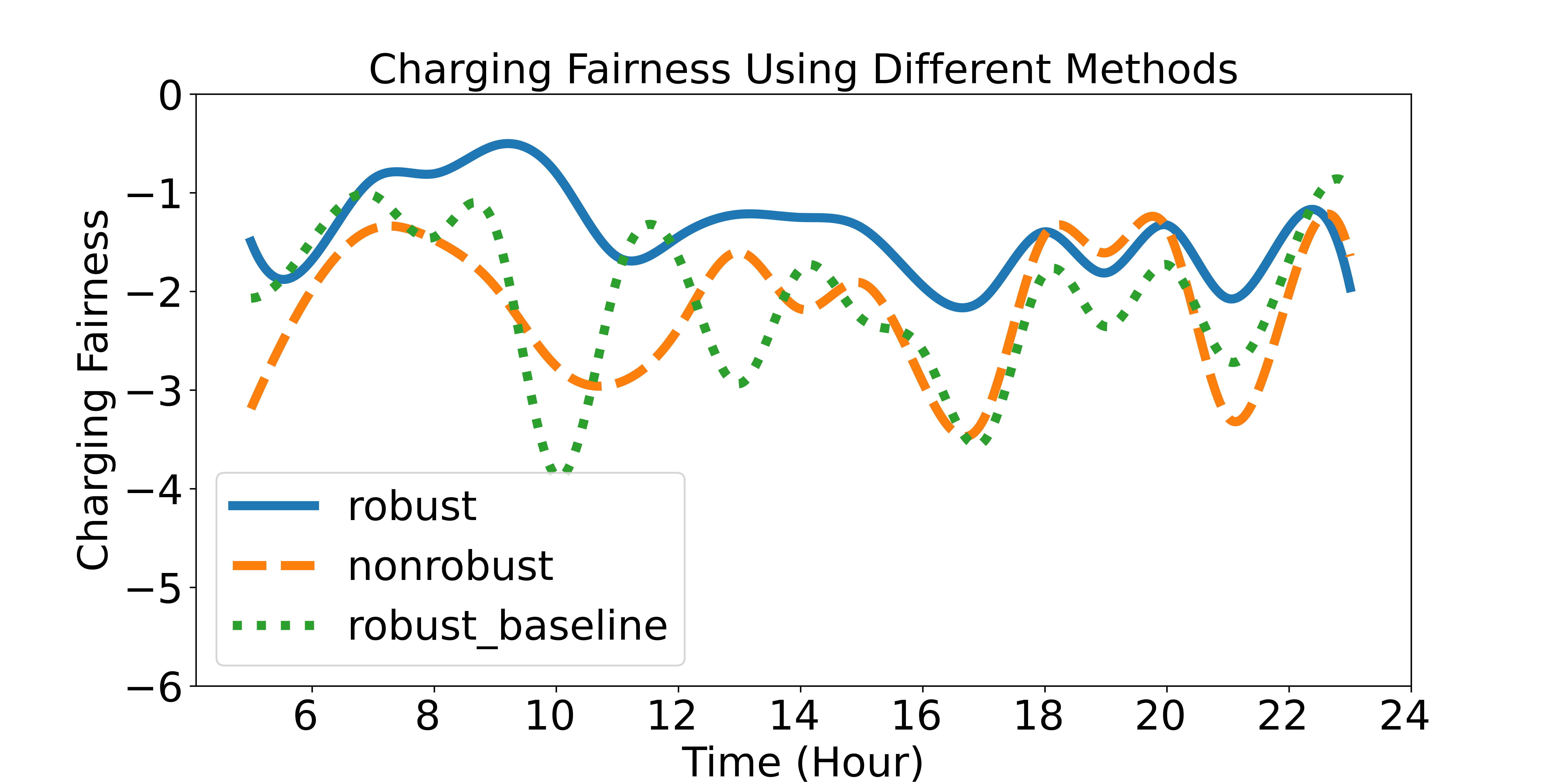}
	\vspace{-20pt}
	\caption{By using our DRO method, the average fairness of charging supply-demand ratio is improved by 34.51\% compared to non-robsut method.}
	\label{fig:fairc}
	\vspace{-10pt}
\end{figure}

We validate the robustness of our proposed uncertainty set construction Alg.~\ref{alg: estimation} in Fig.~\ref{fig_variance}-1 and Fig.~\ref{fig_variance}-2. We run Alg.~\ref{alg: estimation} with different bootstrap time $A$ and the baseline algorithm~\cite{miao2021data} for 20 times, using the same prediction model and parameters. We use $A=0$ to denote the baseline algorithm. Then we compute the variances of the outputted uncertainty set parameters. Variance is an indicator of the robustness of random algorithms\cite{dietterich1995machine}. It is known that in several general cases, a random algorithm which produces a set of outputs with a smaller variance is more robust. Compared to the baseline algorithm, our Alg.~\ref{alg: estimation} produces outputted constraint parameters with smaller variances. In particular, our construction algorithm improve the robustness to randomness by at least 65\%, compared with the baseline algorithm. And the values of variance decrease as $A$ getting larger.

\vspace*{-5pt}
\subsection{Performance Comparison}
We compare our distributionally robust optimization (DRO) method with the baseline robust method~\cite{hao2020robust} and the non-robust method~\cite{yuan2019p2charing} by using the same real-time sensing data. The non-robust method is described in Appendix, please refer to Equation \eqref{non_robust_prob}. The EV balancing decision is made every hour, and the running time of our algorithm is within 5 seconds ($3.76 \text{ s } \pm 1.18$), which can satisfy the real-time requirement of making balancing decisions.
\begin{table}[]
\centering
\caption{Mean square error of different models}
\vspace*{-5pt}
\begin{tabular}{c|c|c|c}
\hline
    & MLP         & ARIMA       & LSTM        \\ \hline
MSE & 20148.21 & 254.65 & 647.45 \\ \hline
\end{tabular}
\label{tab_mse}
\vspace*{-15pt}
\end{table}
Fig. \ref{fig:total_cost} shows the total balancing cost $M_b^k$ from 5am to 11pm using our method \eqref{opt_final} as well as two baseline methods. The metric $M_b^k$ is formally given in \eqref{metric_blancing}, which is a weighted sum of the vacant and low-battery EVs' moving distance after executing balancing decisions at time $k$. A smaller $M_b^k$ is better, since a smaller total balancing cost indicates higher system efficiency and lower distance that the EVs run without serving passengers.
\begin{align}
\label{metric_blancing}
    M_b^k &= J_D(X^{k}, Y^k) = \sum\limits_{i=1}^{N} \sum\limits_{j=1}^{N} (x^k_{ij} w_{ij} + \beta y^k_{ij} w^*_{ij}).
\end{align}
We can see that most of the time, the total cost of our method is lower than that of the two baseline methods. In particular, the average total balancing cost is reduced by 14.49\% compared with the non-robust method and reduced by 7.37\% compared with the baseline robust method. 
In Fig. \ref{fig_variance}-3, we also show the variance of the optimal daily balancing cost of the DRO~\eqref{opt_final} solutions when using Alg. 1 with different bootstrap parameters $A$ and the baseline uncertainty set construction algorithm \cite{miao2021data} ($A$ = 0). For an uncertainty set produced in different settings, we keep other parameters the same and run our DRO method for $20$ times. Then we compute the variance of daily balancing cost $\sum_{k=1}^K M_b^k$. The variance of the daily balancing cost decreases as $A$ gets larger. And when using our Alg.~\ref{alg: estimation}, we obtain a lower variance of daily balancing cost, compared with the baseline algorithm. In particular, the variance is reduced by at least 31\%, compared with the baseline algorithm. Our DRO method achieves more stable balancing cost, by using together with our proposed uncertainty set construction algorithm. 

In Fig.~\ref{fig:fairm} and Fig.~\ref{fig:fairc}, we show the charging fairness $M_c^k$ and mobility fairness $M_m^k$ of executing different balancing decisions from 5am to 11pm, respectively. As the fairness metrics defined in \eqref{metric_charging} and \eqref{metric_mobility}, the charging fairness $M_c^k$ is a negative sum of total absolute difference between the global and local charging supply-demand ratios at time $k$. The mobility fairness $M_m^k$ has a similar definition. For both of them, larger values means higher fairness, which are preferable. And higher fairness indicates better balanced resource allocation and similar service quality for customers among different regions and time frames in the city. 
\begin{align}
\label{metric_charging}
    M_c^k &= -J_1(Y^{k}) = -\sum_{i \in \sigma}\left| \frac{c^k_i}{T^k_i} - \frac{\sum_{j \in \sigma}c^k_j}{\sum_{j \in \sigma} T^k_i}\right|;\\
    \label{metric_mobility}
        M_m^k &= -J_M(X^{k}) = -\sum_{i=1}^{N}\left| \frac{r^k_i}{S^k_i} - \frac{\sum_{j=1}^{N}r^k_j}{\sum_{j=1}^{N} S^k_j}\right|.
\end{align}
By using our DRO method, the average mobility fairness and charging fairness is improved by 15.78\% and 34.51\%, respectively, compared to the non-robust method, and improved by 10.45\% and 30.92\%, respectively, compared to the baseline robust method. Our DRO method outperforms the non-robust method because it considers both supply and demand uncertainties when making EV balancing decisions. The DRO problem formulation, uncertainty set construction based on data, and equivalent convex optimization form derivation procedures are all designed carefully to solve the challenge in a computationally tractable way. In contrast, the baseline methods in the literature either does not consider model uncertainties such as the non-robust methods, or only considers one type of uncertainty such as the robust optimization method with the demand uncertainty.
\begin{figure}[!htb] \centering
\vspace*{-15pt}
\includegraphics[width=0.5\textwidth, keepaspectratio=true]{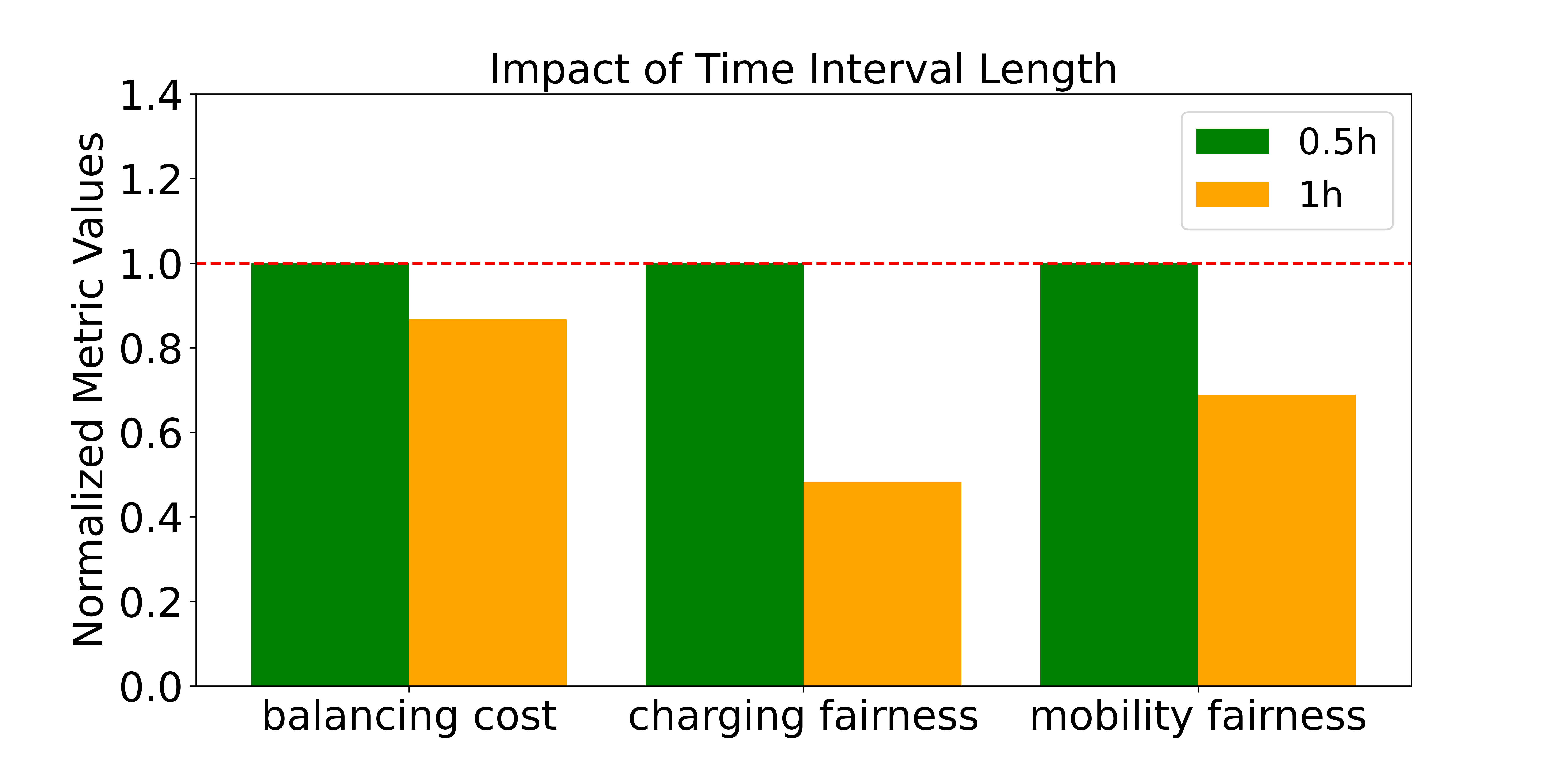}
\vspace*{-22pt}
\caption{With a shorter time interval length, the DRO solution results a higher total balancing cost in the same time frame, but better mobility and charging fairness.}
\vspace*{-10pt}
\label{fig_time_interval}
\end{figure}

In Fig. \ref{fig_time_interval}, we show the effect of different time interval lengths on our DRO method. We compare the total balancing cost, the mobility and charging fairness when using 0.5 hour and 1 hour as the time interval length, respectively. The metric values of the balancing cost, the mobility and charging fairness shown in Fig. \ref{fig_time_interval} are normalized within $[0,1]$ such that the highest result is fixed at value 1. With a shorter time interval l0,1ength, the DRO solution results a higher total balancing cost (a larger value in Fig.~\ref{fig_time_interval}) in the same time frame, but better mobility and charging fairness. One interpretation is that there exists trade off between different objectives for balancing EVs in the AMoD system. For instance, the algorithm can sacrifice balancing cost to get higher charging and mobility fairness by balancing the vehicles more frequently.

\begin{remark}[Practical implementation to address ITS open problems]
Our proposed DRO method can be applied to the control of EV AMoD systems such as EV balancing and charge scheduling problems. Vehicle balancing is an important objective of EV AMoD systems \cite{ammous2018optimal, wang2019sharedcharging, wang2020understanding}. By repositioning of customer-free EVs, it aims to minimizing the imbalance of the EV distribution caused by asymmetrical transportation demand \cite{zardini2022analysis}. Our method achieves lower balancing cost and higher mobility and charging fairness, thus contributes to congestion mitigation and transportation efficiency. Further, it is verified by using real world data thus able to provide insights and helps to autonomous taxis operation companies in their decisions.
\end{remark}
\vspace*{-5pt}
\begin{remark}[Limitations] (i) In this paper, we did not consider the potential impacts of political policies such as tax and subsidy in EV AMoD systems. For example, the EV promotion in Shenzhen is strongly supported by the government \cite{li2020shenzhen}. However, some literature has found that though tax and subsidy affect EV fleet evolution, there are little impacts on the EVs mobility and charging patterns \cite{wang2020understanding}. Therefore, political factors are not emphasized in this work. (ii) The potential impacts of private EVs in AMoD systems are not considered. For instance, private EVs may share charging stations with autonomous EVs. However, according to some field studies in Shenzhen \cite{wang2019sharedcharging}, few private EVs prefer utilizing fast charging stations because they have no need to leverage fast charging like commercial EVs for keeping normal business activities. Therefore, we did not explicitly include the impact of private EVs in AMoD balancing decisions. We do update the status and available spots of charging stations before making EV charging decisions in the designed DRO method, to mitigate the effects of other EVs that share the charging resource with the EV AMoD system. \textcolor{black}{(iii) Our model shows good performance on the six-week data, and we will try to test it on datasets of longer time duration after we can have access to them. }
\end{remark}
\color{black}

\vspace{-10pt}
\section{Conclusion}
\vspace{-5pt}
\label{sec:conclusion}

Autonomous mobility-on-demand systems can provide efficient transportation services. However, with an increasing number of EVs, it is still challenging to improve the efficiency of AMoD systems under EVs supply uncertainty, due to limited charging facilities in cities and complicated charging dynamics. In this paper, we design a data-driven distributionally robust EV balancing method to minimize the worst-case expected cost under uncertainties of both passenger mobility demand and EV supply. In addition to reducing total vehicle balancing costs, we also balance the mobility and charging supply-demand ratios of different regions in the city. We propose efficient algorithms to construct distributionally uncertainty sets of the predicted mobility demand and EV supply. Then we derive an equivalent computationally tractable form of the distributionally robust EV balancing problem under the ellipsoid uncertainty sets constructed from historical data. Evaluations based on real-world E-taxi data show that the average total balancing cost is reduced by 14.49\%, and the average passenger mobility fairness and EV charging fairness are improved by 15.78\% and 34.51\%, respectively. In the future, we will further evaluate our algorithm based on large-scale data from several years from multiple cities. 

\section{Appendix}

\subsection{Proof of Lemma~\ref{lem_exist_bound}}
\label{proof_exist_bound}

\begin{proof}
From the real data, we have historical ${S}^{1:\tau}$ and  ${r}^{1:\tau}$: $\hat{S}^{1:\tau} = \{\hat{S}^1_1,...,\hat{S}^{\tau}_N \}$, $\hat{r}^{1:\tau} = \{\hat{r}^1_1,...,\hat{r}^{\tau}_N \}.$
Then $l_g^k = \min\{ \hat{S}^k_i/\hat{r}^k_i \}_{i \in \{1,...,N\}}$ is a global lower bound and $h_g^k = \max\{ \hat{S}^k_i/\hat{r}^k_i \}_{i \in \{1,...,N\}}$ is a global upper bound in time $k$ when there is no optimization on balancing. We call these two bounds global bounds. Let $l^k = l_g^k$, $h^k = h_g^k$ for all $k$, the supply-demand ratio after balancing should be contained in this range. We denote these global lower and upper bounds of the mobility supply-demand ratio as $\bar{l}^k$ and $\bar{h}^k$.

Consider the optimization problem~\eqref{opt:lem} that is similar to problem~\eqref{opt_minmax} except that problem~\eqref{opt:lem} uses the global bounds as lower and upper bounds in the quality constraints.
\begin{align}
	\begin{split} 
	\underset{\substack{X^{1:\tau}, Y^{1:\tau}, S^{1:\tau}, D^{1:\tau},\\ U^{1:\tau},  V^{2:\tau}, O^{2:\tau}, L^{2:\tau}}}{\text{min.}} \
		 &\underset{\{F_r\in \mathcal{F}_r,F_c\in \mathcal{F}_c\} }{\text{max.}}\ \mathbb{E}\left[J_D+\theta J_E \right]\\
		\text{s.t.}\ \text{\eqref{def_cons_mobility};}&\ \text{ \eqref{def_cons_charging};}\ \text{ \eqref{def_cons_moving};} \\
				r_i^k - \bar{l}^k S_i^k &- (D_i^k)^2 = 0,\\
        r_i^k - \bar{h}^k S_i^k &+ (U_i^k)^2 = 0,\\ 
        i \in \{1,\dots,N&\}, \ k \in \{1,\dots,\tau\}.
	\end{split}
	\label{opt:lem}
\end{align}
Suppose problem~\eqref{opt:lem} has feasible optimal solutions $S^{*1:\tau} = \{{S}^{*1}_1,...,{S}^{*\tau}_N \}$ and from the real data we have historical $\hat{r}^{1:\tau} = \{\hat{r}^1_1,...,\hat{r}^{\tau}_N \}$. Then we can compute $l^k$ and $h^k$ as following:
\begin{align}
	\begin{split}
       l^k = \max\{ l_g^k, \min\{ {S}^{*k}_i/\hat{r}^k_i \}_{i \in \{1,...,N\} }\},\\
        h^k = \min\{ h_g^k, \max\{ {S}^{*k}_i/\hat{r}^k_i \}_{i \in \{1,...,N\} }\}.
	\end{split}
\end{align}
It is obvious that $l^k \geq l_g^k$ and $h^k \leq h_g^k$, so the range $[l^k,g^k]$ is no wider than $[l_g^k, h_g^k]$. We find feasible lower and upper bounds that no loose than the global bounds.
\end{proof}

\subsection{Proof of Lemma~\ref{lem_convex}}
\label{proof_lem_convex}
\begin{proof}
For the generalized EV balancing optimization problem, let's first check the modified constraints and objective functions. The modified constraints \eqref{ge_bound}, \eqref{ge_trans} and \eqref{ge_con_charing} are still linear equality or inequality.
So if the objective function in problem~\eqref{ge_minmax} is convex over all $S_{e,i}^k,V_{e,i}^k,O_{e,i}^k,L_{e,i}^k$, the modified objective function is a convex problem of the decision variables since the composition of affine or linear operation preserves convexity [\cite{boyd2004convex}, Chapter 3.2.2]. Now we only need to check the minimization problem~\eqref{ge_obj_final} part in the generalized optimization problem is convex (since the maximization part is over the uncertain demand and supply parameters, not affected by the new formulation of $S_{e,i}^k,V_{e,i}^k,O_{e,i}^k,L_{e,i}^k$).
\begin{align}
	\begin{split} 
		\underset{\substack{X^{1:\tau}, Y^{1:\tau}, S^{1:\tau}, D^{1:\tau},\\ U^{1:\tau},  V^{2:\tau}, O^{2:\tau}, L^{2:\tau}}}{\text{min.}} \left[J_D^{\prime}+\theta J_E^{\prime} \right]
		\text{s.t.} \text{\eqref{ge_bound}; \eqref{ge_trans}; \eqref{ge_con_charing}; \eqref{def_cons_mobility_unfairness}. }
	\end{split}
	\label{ge_obj_final}
\end{align}

By the definition of $J_D^{\prime} (X^{1:\tau}_{1:E}, Y^{1:\tau}_{1:E})$ in \eqref{ge_obj_dist},
it is a linear function of $X^{k}_e$ and $Y^{k}_e$, hence also a convex function of
$X^{k}_e$
and $Y^{k}_e$,
$\forall k = 1,..., \tau;\ e = 1,..., E$
according to the definition of convex. And $J_E^{\prime}$ is a convex function of all decision variables for any fixed value of $c^k_e$, $\forall k = 1,..., \tau;\ e = 1,..., E$: this is because the function of power $\frac{1}{x^{a}}$ is convex on scalar $x>0$ when
$a>0$ [\cite{boyd2004convex}, Chapter 3.1.5],
so $\forall k = 1,..., \tau$, $J_{E}^{i,e,k} = \frac{c_{e,i}^k}{(T^k_i)^{a}}$ is a convex function on
$T^k_i>0$. For more detail, for $T_i^k = \sum_{e=1}^E T^k_{e,i}= \sum_{e=1}^E[\sum\limits_{j=1}^{N} y^{k}_{e,ji}-\sum\limits_{j=1}^{N} y^k_{e,ij}]$, with a matrix $B^i \in \mathbb{R}^{N\times N}$ that $B^i_{ji} = 1$,
$B_{ij} = -1,$ $Tr[B^iY^k_e] = T_{e,i}^k$. Then $J_{E}^{i,l,k} = 1/(\sum_eTr[B^iY^k_e])^{a}$ is a composition of convex function $1/x^{a}$ with an affine mapping: trace of the multiplication of metrics $B^i$ and $Y^k_e$. It's an operation that preserves convexity [\cite{boyd2004convex}, Chapter 3.2.2]. Finally, $\theta>0$,
$J_D^{\prime}+\theta J_E^{\prime}$, $J_E^{\prime} = \sum_{k=1}^{\tau} \sum_{e=1}^{L} \sum_{i \in \sigma} J_{E}^{i,e,k}$
are both weighted
sums of convex function, an operation that preserves convexity [\cite{boyd2004convex}, Chapter 3.2.1]. Hence, the minimization problem~\eqref{ge_obj_final} is a convex optimization problem.
\end{proof}
\vspace*{-10pt}
\subsection{Proof of Lemma~\ref{lem_alg}}
\label{proof_lem_alg}
\begin{proof}
Without loss of generality, we prove the case for constraint parameter $\gamma_\alpha$. Then according to the definition of quantiles $q_{\eta/2}^{\gamma}$ and $q_{1-\eta/2}^{\gamma}$ we have:
$1-\eta = P(q_{\eta/2}^{\gamma} \leq \frac{\tilde{\gamma}_{\alpha} - \hat{\gamma}_{\alpha}}{{s}_{\gamma}}  \leq q_{1-\eta/2}^{\gamma}).$
Under the assumption that the distribution of $\frac{\tilde{\gamma}_{\alpha} - \hat{\gamma}_{\alpha}}{{s}_{\gamma}}$ is close to the distribution of $\frac{\hat{\gamma}_{\alpha} - {\gamma}_{\alpha}}{{s}_{\gamma}}$ [\cite{arnab2017survey}, Chapter 18.6.1.1.2], we have: $P(q_{\eta/2}^{\gamma} \leq \frac{\tilde{\gamma}_{\alpha} - \hat{\gamma}_{\alpha}}{{s}_{\gamma}}  \leq q_{1-\eta/2}^{\gamma}) = P(q_{\eta/2}^{\gamma} \leq \frac{ \hat{\gamma}_{\alpha} - \gamma_{\alpha}}{s_{\gamma}}  \leq q_{1-\eta/2}^{\gamma}) = P(\hat{\gamma}_{\alpha}-s_{\gamma}q_{1-\eta/2}^{\gamma} \leq \gamma_{\alpha}  \leq \hat{\gamma}_{\alpha} -s_{\gamma}q_{\eta/2}^{\gamma}) = 1-\eta$.
So the probability that $\gamma_{\alpha}$ is  between $\hat{\gamma}_{\alpha} - s_{\gamma}q_{\eta/2}^{\gamma}$ and $\hat{\gamma}_{\alpha1} - s_{\gamma}q_{1-\eta/2}^{\gamma}$ equals to $1-\eta$.
\end{proof}

\vspace*{-10pt}
\subsection{Proof of Theorem~\ref{thm:theory1}}
\label{proof_theorem1}
\begin{proof}
We notice that the uncertainty parameters are involved in both objective functions and constraints. We first use $\underset{F_r\in \mathcal{F}_r}{\text{max.}} \mathbb{E}(r_i^k)$ substitute $r_i^k$, $\underset{F_c\in \mathcal{F}_c}{\text{min.}} \mathbb{E}(c_i^k)$ substitute $c_i^k$ in constraints \eqref{def_cons_mobility} and \eqref{def_cons_mobility_unfairness}. It's valid for minimizing the worst case: any uncertain values of $r_i^k$ and $c_i^k$ should meet the relationship with other decision variables in constraints \eqref{def_cons_mobility} and \eqref{def_cons_mobility_unfairness}. No matter which probability distribution is selected as the specific distribution to attain the worst case, the simplest worst case for single value of $r_i^k$ and $c_i^k$ is the case that the demand is really large that attains the maximal possible demand value while the supply is very small that attains the minimal possible supply value. And $\mathbb{E}(r_i^k) (\mathbb{E}(c_i^k))$ is the probability-weighted average of all its possible values. Then we transfer all constraints into functional formats as below:
\vspace{-5pt}
\begin{align}
\begin{split}
	    f_{D_i^k} &= \underset{F_r\in \mathcal{F}_r}{\text{max.}} \mathbb{E}(r_i^k) - l_i^k S_i^k - (D_i^k)^2 = 0,\\
        f_{U_i^k} &= \underset{F_r\in \mathcal{F}_r}{\text{max.}} \mathbb{E}(r_i^k) - h_i^k S_i^k + (U_i^k)^2 = 0,\\ 
	    f_{S_i^k} &= -S^k_i + X_i^k + V^k_i = 0,\quad k=1,\dots,\tau, \\
		f_{V_i^{k+1}} &= -V^{k+1}_i + \sum\limits_{j=1}^{N} P^k_{vji}S^k_j+ \sum\limits_{j=1}^{N} Q^k_{vji}O^k_j \\
		& \quad - \underset{F_c\in \mathcal{F}_c}{\text{max.}} \mathbb{E}(-c_i^k) = 0,\\ 
		f_{O_i^{k+1}} &= -O^{k+1}_i + \sum\limits_{j=1}^{N}P^k_{oji}S^k_j+ \sum\limits_{j=1}^{N} Q^k_{oji}O^k_j = 0,,\\
	    f_{L_i^{k+1}} &= - L^{k+1}_i + Y_i^k + \sum\limits_{j=1}^{N} P_{lji}^k S^k_{j} = 0, 
	     L_i^k > 0, \\\quad k&=1,\dots,\tau-1,\quad
	     S_i^k > 0, \quad k=1,\dots,\tau.
	\end{split}
\label{con_functional}
\end{align}
	Let $f_D = [f_{D_1^1},f_{D_1^2},...,f_{D_1^\tau},...,f_{D_N^\tau}]^T \in \mathbb{R}^{N\tau}$ be a constraint function vector, for $i = 1, \dots,N, k = 1,\dots,\tau-1$, and $f_U,f_S,f_V,f_O,f_L$ have the same definition but for computational convenient, if one's dimension is less than $N\tau$, we add 0 in corresponding missing positions to complete its dimension. For the primal maximization problem 
	\begin{align*}
    \underset{F_r\in \mathcal{F}_r,F_c\in \mathcal{F}_c }{\text{max.}}\ &\mathbb{E}\left[J_D+\theta J_E \right],\quad
		\text{s.t.   } \eqref{con_functional}
	\end{align*}
The primal objective function only contains uncertainty parameter $c$ and is concave over $c$, because it's a linear function of $c$ given other decision variables. The constraints are also all linear in $r$ and $c$. For this primal problem, strong duality and Slater’s theorem hold according to~\cite{boyd2004convex}. Then the Lagrange dual problem \eqref{dual} can obtain its best upper bound: 
	\begin{align}
	\begin{split}
	   & \underset{\lambda \succeq 0}{\text{min}}\
		\underset{F_r\in \mathcal{F}_r,F_c\in \mathcal{F}_c }{\text{max}}\ J_{dual},\\
	    J_{dual} &= \mathbb{E}\left[J_D+\theta J_E \right] - (\lambda_U^T f_U + \lambda_S^T f_S + \lambda_V^T f_V \\
	    &+ \lambda_O^T f_O + \lambda_L^T f_L + \lambda_s^T S^{1:\tau} + \lambda_l^T L^{2:\tau})
	\end{split} 
	\label{dual}
	\end{align}
Here $\lambda_U^T, \lambda_S^T, \lambda_V^T, \lambda_O^T, \lambda_L^T, \lambda_s^T, \lambda_l^T$ are the corresponding Lagrange multipliers and $\lambda$ is defined as a concatenated vector combined by all these Lagrange multipliers. We have $\frac{c_i^k}{(T^k_i)^a} \geqslant 0$ and $c_i^k \geqslant 0$ by the definitions of $J_E$ in~\eqref{def_charging_unfairness}, then for any vector $Z\in \mathbb{R}^{N\tau}$, $Z=[z^1_1, z^1_2,\dots, z^{\tau}_1, z^{\tau}_2, \dots,z^{\tau}_{N\tau}]^T$ that satisfies $0<\frac{1}{(T^k_i)^{a}} \leqslant z_{i}^k$, we also have\\
	\centerline{$
		0 \leqslant \sum_{k=1}^{\tau} \sum\limits_{i=1}^{N} \frac{c^k_i}{(T^k_i)^a} \leqslant Z^T c,
		$}
and the second inequality strictly holds when all $\frac{c^k_i}{(T^k_i)^{a}}=z_i^k$, for $i=1,\dots,N$, $k=1,\dots, \tau$. The constraints of problem~\eqref{dual} are independent of $c$, hence, for any $c$, the minmax problem \eqref{dual} is equivalent to 
	\begin{align}
	    \begin{split}
	        \underset{\lambda \succeq 0}{\text{min}}\
		    \underset{F_r\in \mathcal{F}_r,F_c\in \mathcal{F}_c }{\text{max}}\ &J_{dual}^\prime
		    \text{s.t.} \frac{1}{(T_i^k)^a} \leqslant z_i^k, \quad Z \in \mathbb{R}^{N\tau},
	\end{split}\label{dual_2}
	\end{align}
	where $J_{dual}^\prime = \mathbb{E}\left[J_D+\theta Z^T c \right] - (\lambda_U^T f_U + \lambda_S^T f_S + \lambda_V^T f_V 
	    + \lambda_O^T f_O + \lambda_L^T f_L + \lambda_s^T S^{1:\tau} + \lambda_l^T L^{2:\tau})$.
	We separate $J_{dual}^\prime$ into three parts: $H_r =  -(\lambda_U^T + \lambda_D^T) r, 
	    H_c =  \theta J_E - \lambda_V^T c, H_o = J_{dual}^\prime - \mathbb{E}[H_c + H_r],$ only $H_r$ contains all $r$, $H_c$ contains all $c$. $H_o$ can be put as a deterministic value given other decision variables. So we have the following maximization problem
	\begin{align}
		\underset{r \sim F_r, c \sim F_c, F_r\in \mathcal{F}_r,F_c\in \mathcal{F}_c}{\text{max}} \mathbb{E} [H_r + H_c].
		\label{obj_max_all}
	\end{align}
	Since $r$ and $c$ are independent, problem \eqref{obj_max_all} is equivalent to the separated maximization problem
	\begin{align}
		\underset{r \sim F_r, F_r\in \mathcal{F}_r}{\text{max}} \mathbb{E} [H_r] + \underset{c \sim F_c, F_c\in \mathcal{F}_c}{\text{max}} \mathbb{E}[H_c].
		\label{obj_max_sep}
	\end{align}
	Problem \eqref{obj_max_sep} satisfies the conditions of Lemma 1 in~\cite{delage2010distributionally}, and the maximum expectation value of $H_r + H_c$ for any possible $r \sim F_r ,c \sim F_c$ where $F_r \in \mathcal{F}_r ,F_c \in \mathcal{F}_c$ equals the optimal value of the problem
		\begin{align}
		\begin{split}
			\min_{\substack{Q_r,q_r,v_r,t_r;\\Q_c,q_c,v_c,t_c}}\quad &v_r + t_r + v_c + t_c\\
			\text{s.t.}
			\quad & v_r\geqslant H_r -r^T Q_r r -r^T q_r, Q_r, Q_c\succeq 0\\
			&t_r \geqslant (\hat\gamma_{r} \hat{\Sigma}_r+\hat{r}\hat{r}^T)\cdot Q_r + \hat{r}^T q_r
			\\&\quad\quad+\sqrt{\hat\omega_{r}}
			\|\hat{\Sigma}_r^{1/2} (q_r+2Q_r\hat{r})\|_2 ,\\
			\quad & v_c\geqslant H_c -c^T Q_c c -c^T q_c,\\
			&t_c \geqslant (\hat\gamma_{c} \hat{\Sigma}_c+\hat{c}\hat{c}^T)\cdot Q_c + \hat{c}^T q_c \\&\quad\quad+\sqrt{\hat\omega_{c}} \|\hat{\Sigma}_c^{1/2} (q_c+2Q_c\hat{c})\|_2.
		\end{split}
		\label{step1}
	\end{align}
	Note that the first and third constraints about $v_r$ and $v_c$ is equivalent to $v_r \geqslant f_r(r^*)$ and $v_c \geqslant f_c(c^*)$ where $f_r(r^*) ( f_c(c^*) )$ is the optimal value of the following problem 
	    \begin{align}
	        \begin{split}
	            \max_{r} \quad & H_r - r^T Q_r r -r^T q_r\quad
	                 \text{s.t.} \quad r \geqslant 0.\\
	            \max_{c} \quad & H_c - c^T Q_c c -c^T q_c\quad
	                 \text{s.t.} \quad c \geqslant 0.
	        \end{split}
	        \label{obj_cons}
	    \end{align}{}
	Since $Q_r$ and $Q_c$ are positive semi-defined, $H_r (H_c)$ is a linear function over $r (c)$, problem \eqref{obj_cons} is convex. Solving this problem by taking partial derivative over $r (c)$ without constraints, we have:
	    \begin{align}
	        \begin{split}
	            v_r & \geqslant\frac{1}{4}(q_r + \lambda_U + \lambda_D)^T Q_r^{-1} (q_r + \lambda_U + \lambda_D)\\
	            v_c & \geqslant\frac{1}{4}(q_c + \lambda_V - Z)^T Q_c^{-1} (q_r + \lambda_V - Z)
	        \end{split}{}
	    \end{align}{}
	By Schur complement, the above constraints are
	    \begin{align}
	        \begin{split}
	            &\quad  \begin{bmatrix}v_r & \frac{1}{2}(q_r+\lambda_U + \lambda_D)^T\\ \frac{1}{2}(q_r+\lambda_U + \lambda_D) & Q_r 
			\end{bmatrix} \succeq 0,\\ \\
			&\quad \quad \begin{bmatrix}v_c & \frac{1}{2}(q_c+\lambda_V - Z)^T\\ \frac{1}{2}(q_c+\lambda_V - Z) & Q_c 
			\end{bmatrix} \succeq 0,\\
	        \end{split}{}
	    \end{align}{}
But these constraints are under the conditions of no constraints in problem \eqref{obj_cons}. We still have another two constraints for $v_r (v_c)$ that $v_r \geqslant f_r(r = 0) = 0$ and $v_c \geqslant 0$. We then use the fact that min-min operations can be performed jointly and combine all constraints to reformulate problem \eqref{dual} as \eqref{thm:theory1}.
\end{proof}

\definecolor{bln_blue}{HTML}{AED6F1}
\definecolor{bln_red}{HTML}{F5B7B1}
\definecolor{bln_green}{HTML}{D4EFDF}
\definecolor{bln_magenta}{HTML}{9B59B6}

\vspace*{-10pt}
\subsection{Non-robust Method Used in Experiment}
The non-robust method treats the predictions of demand and supply as deterministic vales. Therefore, the non-robust method is solving a minimization problem \eqref{non_robust_prob} instead of a minimax problem.
\begin{align}
	\begin{split}
		\underset{\substack{X^{1:\tau}, Y^{1:\tau}, S^{1:\tau}, D^{1:\tau},\\ U^{1:\tau},  V^{2:\tau}, O^{2:\tau}, L^{2:\tau}}}{\text{min.}} \mathbb{E} \left[ J_D + \theta J_E \right]
\text{s.t. } \text{\eqref{def_cons_mobility}, \eqref{def_cons_charging}, \eqref{def_cons_moving}, \eqref{def_cons_mobility_unfairness}, }
	\end{split}
	\label{non_robust_prob}
\end{align}
In the minimization problem \eqref{non_robust_prob}, $J_D$ is defined in Eq.~\eqref{obj_dist} and $J_E$ is defined in Eq.~\eqref{def_charging_unfairness}, $\theta$ is a positive coefficient which is chosen as the same value of the weight in the distributionally robust optimization EV balancing problem formulation (the robust minimax problem), i.e. Equation \eqref{opt_final}. We have discussed or proved that the objective functions $J_D$ and $J_E$ are both convex over all variables in the text. And all constraints \eqref{def_cons_mobility}, \eqref{def_cons_charging}, \eqref{def_cons_moving},  and \eqref{def_cons_mobility_unfairness} are convex (linear or quadratic). Therefore, this non-robust minimization problem can be solved through any convex optimization solver. This non-robust minimization problem is in a receding horizon control paradigm as well. So we are able to use the same parameters and real-sensing data used in our DRO method. The predictions of demand $d$ and supply $c$ are obtained from the same well-trained ARIMA model used in our DRO method.

\bibliographystyle{ieeetr} 
{ \small 
\bibliography{sec_10ref}
}

\end{document}